\documentclass[a4paper,12pt]{article}

\usepackage{amsmath,amsfonts,amssymb,amsthm,dsfont,bbm,mathrsfs}
\usepackage{geometry}
\usepackage[hidelinks]{hyperref}
\usepackage{paralist}
\usepackage{enumitem}
\usepackage[english]{babel}
\usepackage{textcase} 
\usepackage{filemod}
\usepackage[longnamesfirst]{natbib}
\usepackage{etoolbox} 
\usepackage{color}

\makeatletter

\def\cite{\citet}

\theoremstyle{plain}
\newtheorem{theorem}{Theorem}[section]

\newtheorem{lemma}[theorem]{Lemma}
\newtheorem{corollary}[theorem]{Corollary}

\theoremstyle{definition}

\newtheorem{remark}[theorem]{Remark}

\numberwithin{equation}{section}
\allowdisplaybreaks

\def\@noindentfalse{\global\let\if@noindent\iffalse}
\def\@noindenttrue {\global\let\if@noindent\iftrue}
\def\@aftertheorem{%
  \@noindenttrue
  \everypar{%
    \if@noindent%
      \@noindentfalse\clubpenalty\@M\setbox\z@\lastbox%
    \else%
      \clubpenalty \@clubpenalty\everypar{}%
    \fi}}
\AfterEndEnvironment{theorem}{\@aftertheorem}
\AfterEndEnvironment{lemma}{\@aftertheorem}
\AfterEndEnvironment{definition}{\@aftertheorem}
\AfterEndEnvironment{lemma}{\@aftertheorem}
\AfterEndEnvironment{corollary}{\@aftertheorem}
\AfterEndEnvironment{remark}{\@aftertheorem}
\AfterEndEnvironment{example}{\@aftertheorem}
\AfterEndEnvironment{proof}{\@aftertheorem}

\renewcommand\section{\@startsection {section}{1}{\z@}%
{-3.5ex \@plus -1ex \@minus -.2ex}%
{1.3ex \@plus.2ex}%
{\center\small\sc\MakeTextUppercase}}

\def\subsection#1{\@startsection {subsection}{2}{0pt}%
{-3.5ex \@plus -1ex \@minus -.2ex}%
{1ex \@plus.2ex}%
{\bf\mathversion{bold}}{#1}}

\def\subsubsection#1{\@startsection{subsubsection}{3}{0pt}%
{\medskipamount}%
{-10pt}%
{\normalsize\itshape}{\kern-2.2ex. #1.}}

\def\be#1{\begin{equation*}#1\end{equation*}}

\def\bes#1{\begin{equation*}\begin{split}#1\end{split}\end{equation*}}

\def\ba#1{\begin{align*}#1\end{align*}}

\setlength{\multlinegap}{1em}


\def\given{\mskip 1.5mu plus 0.25mu\vert\mskip 1.5mu plus 0.15mu}
\newcounter{bracketlevel}%
\def\@bracketfactory#1#2#3#4#5#6{%
\expandafter\def\csname#1\endcsname##1{%
\global\advance\c@bracketlevel 1\relax%
\global\expandafter\let\csname @middummy\alph{bracketlevel}\endcsname\given%
\global\def\given{\mskip#5\csname#4\endcsname\vert\mskip#6}\csname#4l\endcsname#2##1\csname#4r\endcsname#3%
\global\expandafter\let\expandafter\given\csname @middummy\alph{bracketlevel}\endcsname%
\global\advance\c@bracketlevel -1\relax%
}%
}
\def\bracketfactory#1#2#3{%
\@bracketfactory{#1}{#2}{#3}{relax}{0.5mu plus 0.25mu}{0.5mu plus 0.15mu}
\@bracketfactory{b#1}{#2}{#3}{big}{1mu plus 0.25mu minus 0.25mu}{0.6mu plus 0.15mu minus 0.15mu}
\@bracketfactory{bb#1}{#2}{#3}{Big}{2.4mu plus 0.8mu minus 0.8mu}{1.8mu plus 0.6mu minus 0.6mu}
\@bracketfactory{bbb#1}{#2}{#3}{bigg}{3.2mu plus 1mu minus 1mu}{2.4mu plus 0.75mu minus 0.75mu}
\@bracketfactory{bbbb#1}{#2}{#3}{Bigg}{4mu plus 1mu minus 1mu}{3mu plus 0.75mu minus 0.75mu}
}
\bracketfactory{clc}{\lbrace}{\rbrace}
\bracketfactory{clr}{(}{)}
\bracketfactory{cls}{[}{]}
\bracketfactory{abs}{\lvert}{\rvert}
\bracketfactory{norm}{\Vert}{\Vert}
\bracketfactory{floor}{\lfloor}{\rfloor}
\bracketfactory{ceil}{\lceil}{\rceil}
\bracketfactory{angle}{\langle}{\rangle}

\newcounter{ctr}\loop\stepcounter{ctr}\edef\X{\@Alph\c@ctr}%
	\expandafter\edef\csname s\X\endcsname{\noexpand\mathscr{\X}}
	\expandafter\edef\csname c\X\endcsname{\noexpand\mathcal{\X}}
	\expandafter\edef\csname b\X\endcsname{\noexpand\boldsymbol{\X}}
	\expandafter\edef\csname I\X\endcsname{\noexpand\mathbbm{\X}}
\ifnum\thectr<26\repeat

\let\@IE\IE\let\IE\undefined
\newcommand{\IE}{\mathop{{}\@IE}\mathopen{}}
\let\@IP\IP\let\IP\undefined
\newcommand{\IP}{\mathop{{}\@IP}}
\newcommand{\Var}{\mathop{\mathrm{Var}}}

\newcommand{\bigo}{\mathop{{}\mathrm{O}}\mathopen{}}
\newcommand{\lito}{\mathop{{}\mathrm{o}}\mathopen{}}
\newcommand{\law}{\mathop{{}\sL}\mathopen{}}

\let\original@left\left
\let\original@right\right
\renewcommand{\left}{\mathopen{}\mathclose\bgroup\original@left}
\renewcommand{\right}{\aftergroup\egroup\original@right}

\def\^#1{\relax\ifmmode {\mathaccent"705E #1} \else {\accent94 #1} \fi}
\def\~#1{\relax\ifmmode {\mathaccent"707E #1} \else {\accent"7E #1} \fi}
\def\*#1{\relax#1^\ast}
\edef\-#1{\relax\noexpand\ifmmode {\noexpand\bar{#1}} \noexpand\else \-#1\noexpand\fi}
\def\>#1{\vec{#1}}
\def\.#1{\dot{#1}}

\def\atop{\@@atop}
\def\%#1{\mathcal{#1}}

\renewcommand{\leq}{\leqslant}
\renewcommand{\geq}{\geqslant}
\renewcommand{\phi}{\varphi}

\newcommand{\D}{\Delta}
\newcommand{\N}{\mathop{{}\mathrm{N}}}

\newcommand{\I}{\mathop{{}\mathrm{I}}\mathopen{}}
\newcommand{\dtv}{\mathop{d_{\mathrm{TV}}}\mathopen{}}

\newcommand\indep{\protect\mathpalette{\protect\@indep}{\perp}}
\def\@indep#1#2{\mathrel{\rlap{$#1#2$}\mkern2mu{#1#2}}}

\newcommand{\toinf}{\to\infty}

\def\parsetime#1#2#3#4#5#6{#1#2:#3#4}
\def\parsedate#1:20#2#3#4#5#6#7#8+#9\empty{20#2#3-#4#5-#6#7 \parsetime #8}
\def\moddate{\expandafter\parsedate\pdffilemoddate{\jobname.tex}\empty}

\makeatother

\title{\sc\bf\large\MakeUppercase{Central limit theorems in the configuration model}}
\author{\sc A. D. Barbour \and \sc Adrian R\"ollin}

\date{\itshape Universit\"at Z\"urich and National University of Singapore}

\newcommand{\CM}{\mathop{\mathrm{CM}}}

\def\urn{\cU}

\def\ER{Erd\H{o}s-R\'enyi}

\def\setdiff{\triangle}

\def\la{\lambda}
\def\a{\alpha}
\def\m{\mu}
\def\t{\tau}
\def\s{\sigma}
\def\g{\gamma}
\def\ignore#1{}
\def\re{{\mathbb{R}}}

\def\HHH{H}
\def\llVert{\Vert}
\def\rrVert{\Vert}
\def\llvert{|}
\def\rrvert{|}

\begin{document}

\maketitle

\begin{abstract}
\noindent We prove a general normal approximation theorem for local graph
statistics in the configuration model,
together with an explicit bound on the error in the approximation with
respect to the Wasserstein metric.
Such statistics take the form~$T := \sum_{v \in V} H_v$, where~$V$ is
the vertex set, and~$H_v$ depends on a
neighbourhood in the graph around~$v$ of size at most~$\ell$.
The error bound is expressed in terms of~$\ell$, $|V|$, an almost sure
bound on~$H_v$,
the maximum vertex degree~$d_{\max}$ and the variance of~$T$. Under
suitable assumptions on the convergence of the empirical degree
distributions to a limiting distribution, we
deduce that the size of the giant component in the configuration model
has asymptotically Gaussian fluctuations.
\end{abstract}

\section{Introduction}\label{sec1}

Random graphs with a prescribed degree sequence have been intensively
studied in recent years.
This is largely because the binomial degree distributions that are
automatic in Erd\H os--R\'enyi random
graphs do not correspond well with those of many networks observed in
applications, making
more plausible null models essential for assessing statistical significance.
One of the first
results was obtained by \cite{Bender1978}, who investigated the total
number of
graphs with a prescribed degree sequence. Their result was later
generalised by \cite{Bollobas1980}, who made
use of the \emph{configuration model}; this is a random multigraph (i.e.,
a graph possibly containing loops and multiple edges), obtained from a
randomly chosen perfect matching
of the elements of the set of half-edges attached to each vertex. The
importance of the configuration model
lies in the fact that,
conditionally on there being no loops or multiple edges, the resulting
graph is distributed as a uniformly
chosen simple graph with the given degree sequence.

Under certain conditions on the degree sequence, \cite{Molloy1995}
showed that the configuration model has a
giant component which spans a fixed fraction of the vertices, in the
limit when the number of vertices tends
to infinity, with the degree distribution converging to a fixed
probability distribution.
However, relatively little progress has
been made towards understanding the \emph{second-order fluctuations}
of the size of the giant component. Only
recently, \cite{Ball2017} were able to take a first step in that
direction, by providing an asymptotic
expression for the variance of the size of the giant component,
and---by means of numerical simulations---conjecturing a central limit
theorem (CLT).

For the {\ER} random graph $G(n,p)$, CLTs are readily available for
many graph statistics; see, for
example, \cite{Rucinski1988} and \cite{Barbour1989} for subgraph
counts, \cite{Janson2008} for the so-called
\emph{susceptibility} and \cite{Pittel2005} and \cite{Bollobas2012}
for the size of the giant component, to
cite just a few examples. In contrast, for the configuration model, the
literature on CLTs is rather sparse;
we are only aware of the following CLTs: \cite{Janson2008b} for
the $k$-core, \cite{Angel2016} for the number
of loops and multiple edges in the infinite-variance case, \cite
{KhudaBukhsh2017} for certain statistics arising
from the SI-epidemic on the configuration graph and \cite{Riordan2012}
for the size of the giant component in the barely
supercritical case. Riordan's proof is quite different from ours. It is
based on a careful analysis of the exploration process,
and makes use of the graph being almost critical. Finally, \cite
{Athreya2018} have proved a CLT for additive summary statistics in the
subcritical case. Their proof is based on the martingale CLT and,
therefore, also quite different from ours.

The purpose of this article is to prove a general CLT for statistics
which can be expressed as sums of ``local'' vertex statistics. By
``local'', we mean statistics that are determined by
a limited part of the neighbourhood of a vertex that is close to the
vertex itself---for instance, by
the $\ell$ closest neighbours with respect to graph distance, where a
suitable rule is used
if necessary to choose among neighbours that are at the same distance
from the vertex.
Then, as an application of the general result, we prove a CLT for the
size of
the giant component in the configuration model. Following the strategy
of \cite{Ball2017}, instead of approximating
the size of the giant component directly, we approximate the number of
vertices in components of size at
most $n^{\beta}$, for some small $\beta>0$, which is asymptotically
equivalent to the number of vertices \emph{not} in
the giant component.

Let $d=(d_1, \dots, d_n)$ be such that $m:=\sum_{v=1}^n d_v$ is even,
a ``vector of degrees'',
and write $d_{\max}:= \max_{1\leq v\leq n}d_v$.
Let $\cG\sim\CM(d)$ be a realisation of the configuration multigraph
on $n$ vertices, labelled $1$ through $n$,
where vertex $v$ has degree $d_v$.
For each vertex $v \in[n] := \{1,2,\ldots,n\}$, let $\cT(v)$ denote
the rooted component in $\cG$ containing $v$,
with $v$ being assigned the root label, making~$\cT(v)$ a finite,
connected, vertex-labelled and rooted multigraph.

Let $h(\cT)$ be a real-valued function on finite, connected,
vertex-labelled and rooted multigraphs $\cT$.
We are interested in the random quantity
%
\begin{equation}
\label{1} U_{d,h} = \sum_{v\in[n]}h\bigl(
\cT(v)\bigr),
\end{equation}
and the corresponding centred and normalised version $\hat{U}_{d,h} =
(U_{d,h}-\mu_{d,h})/{\sigma_{d,h}}$,
where $\mu_{d,h}=\IE U_{d,h}$ and $\sigma_{d,h}^2 = \Var U_{d,h}$.

\subsection{A central limit theorem for local graph statistics}

For two distributions $F$ and $G$ on $\re$, denote by $d_{\mathrm{W}}(F,G)$
the Wasserstein distance
between $F$ and $G$; that is,
$d_{\mathrm{W}}(F,G) := \sup_{f \in\cF_{\mathrm{W}}} |\int f dF -
\int
f dG|$, where $\cF_{\mathrm{W}}$ denotes the set of real valued
functions with Lipschitz constant at most $1$.
Write $\N(0,1)$ for the standard normal distribution. For any
multigraph $\cT$, we use $ \llvert \cT \rrvert $
to denote the number of vertices in $\cT$.

Let $h$ be a nonnegative function on finite, connected, vertex-labelled
and rooted multigraphs, and
let $\ell\geq1$. We say that \emph{$h$ only depends on the
first $\ell$ vertices away from the root},
if the following holds. Let $\cT$ and $\cT'$ be two finite,
connected, vertex-labelled and rooted multigraphs.
Let $\cT_\ell$ and $\cT'_\ell$ be the respective subgraphs induced
by exploring the respective multigraphs,
starting with the root, breadth first and, for each wave of
exploration, smallest label first, until at
most $\ell$ vertices (including the root) have been explored. If $\cT
_\ell= \cT'_\ell$,
then $h(\cT_\ell) = h(\cT'_\ell)$.

\begin{theorem}\label{THM1} Assume that $h$ only depends on the
first $\ell$ vertices
away from the root. Then, if $2 \leq d_{\max}\leq n^{1/4}$,
$12 \leq\ell\leq n^{1/4}$ and $m\geq\max\{n, 7d_{\max}^2\ell\}$, it
follows that
\begin{equation*}
d_{\mathrm{W}}\bigl(\law(\hat{U}_{d,h}),\N(0,1)\bigr)
\leq
\frac{\llVert  h
 \rrVert ^3d_{\max}^2\ell^{10} n}{4536\sigma_{d,h}^3} + \frac{\llVert  h  \rrVert ^2d_{\max}^{2}\ell^{8}
n^{1/2}}{78\sigma_{d,h}^2},
\end{equation*}
where $ \llVert  h  \rrVert $ denotes supremum norm.
\end{theorem}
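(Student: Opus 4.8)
The plan is to use Stein's method for normal approximation via a size-biased coupling or, more naturally here, a local dependence / dependency-graph type construction adapted to the configuration model's pairing randomness. The key structural observation is that $U_{d,h} = \sum_{v\in[n]} H_v$ with $H_v := h(\cT(v))$, and each $H_v$ is measurable with respect to the $\ell$-neighbourhood exploration around $v$; by the hypothesis on $h$, this exploration touches at most $\ell$ vertices, hence at most $d_{\max}\ell$ half-edges and their partners. So $H_v$ is a function of a bounded (random but controllable) number of pairings in the configuration model. First I would set up the exploration process formally: sequentially reveal the partners of half-edges in breadth-first, smallest-label order, stopping once $\ell$ vertices are seen; call the set of vertices revealed $R_v$ (so $|R_v|\le\ell$, and all half-edges incident to $R_v$ number at most $d_{\max}\ell$). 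Then $H_v$ and $H_w$ can only be dependent if the pairings they examine overlap, which forces $R_v$ and $R_w$ to be ``close'' — within graph distance roughly $2\ell$, or more precisely the explorations share a revealed half-edge.

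Second, I would build the Stein coupling. The cleanest route is the bounded-difference / exchangeable-pairs approach tailored to random pairings: generate $\cG$ by a uniform perfect matching of the $m$ half-edges, and construct $\cG'$ by picking two half-edges uniformly and switching their partners (a ``switching'' step), which is a reversible move with the uniform matching as stationary distribution. One switching changes $U_{d,h}$ by altering only the terms $H_v$ for $v$ whose exploration used one of the (at most four) half-edges involved; the number of such $v$ is at most the number of vertices within the relevant neighbourhoods, bounded by something like $d_{\max}\ell \cdot \ell$ or a similar polynomial in $d_{\max}$ and $\ell$. This gives a bound on $|U_{d,h}(\cG') - U_{d,h}(\cG)|$ of order $\|h\|\, d_{\max}\ell^2$ (one should track the exact constant carefully — that's where $\ell^{10}$ and the numeric constants $4536$, $78$ come from: roughly a third power of a $d_{\max}\ell^{\text{const}}$-type local size enters through the third-moment term in the Stein bound). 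Alternatively, and perhaps what the authors do, is a direct local-dependence decomposition: for each $v$, let $N_v$ be the set of $w$ with $H_w \notindep (H_u)_{u\notin N_v}$ given the exploration from $v$; bound $|N_v|$ by a neighbourhood-size estimate, and apply a Stein bound for sums with local dependence (e.g. the Wasserstein bound of Chen--Shao type), which yields terms of the form $\sum_v \IE|H_v|\,|N_v|^2 \|h\|/\sigma^3$ and $\sqrt{\sum_v \Var(\cdot)}\cdot(\text{local size})/\sigma^2$.

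Third, I would carry out the neighbourhood-size bookkeeping: the only probabilistic input needed is a uniform bound on $|R_v|$ (deterministic: $\le\ell$), on the number of half-edges in play ($\le d_{\max}\ell$), and crucially a bound on how many $w$ have $R_w$ intersecting the revealed half-edges of $R_v$ — this requires controlling the growth of a second layer of exploration, giving a bound of order $d_{\max}^2\ell^{\text{const}}$ on the ``dependency neighbourhood'' of $v$. The conditions $d_{\max}\le n^{1/4}$, $\ell\le n^{1/4}$, $m\ge\max\{n,7d_{\max}^2\ell\}$ are exactly what is needed to ensure the exploration does not run out of half-edges and that the coupling/switching probabilities are comparable to their idealised values (this is the standard ``configuration model behaves like a branching process up to $o(\sqrt m)$ steps'' regime). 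The main obstacle I anticipate is precisely this: controlling the dependence structure induced by the global constraint of the perfect matching — unlike an i.i.d.\ or independent-edge model, revealing one pairing changes the conditional law of all others, so ``local'' dependence is only approximately true, and one must show the correction terms (from the non-product structure of the uniform matching) are lower-order, absorbed into the $n^{1/2}$ term. Concretely, the hard estimate is bounding $\IE|\IE[U - \mu \mid \mathcal{F}_v] - (\text{local Stein correction})|$ or, in the exchangeable-pairs language, controlling $\Var(\IE[\Delta U \cdot \Delta \text{(something)} \mid \cG])$, which is where the careful switching analysis and the explicit powers of $\ell$ are spent. Once that is in hand, the Stein bound assembles the two stated terms: the cubic term $\|h\|^3 d_{\max}^2 \ell^{10} n / \sigma^3$ from the third-moment/remainder contribution, and the $\|h\|^2 d_{\max}^2 \ell^8 n^{1/2}/\sigma^2$ term from the variance-of-conditional-expectation contribution.
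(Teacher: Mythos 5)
You have correctly identified the overall framework (Stein's method for the Wasserstein metric, exploiting that each $H_v$ depends only on an $\ell$-vertex exploration, plus neighbourhood-size bookkeeping of order $d_{\max}\ell$ and the need to control the non-product structure of the uniform matching). However, the two concrete mechanisms you lean on are precisely the ones that do not go through here, and you acknowledge the obstruction without resolving it. An exchangeable pair built from a single switching requires the approximate linearity condition $\IE[W'-W\mid W]\approx-\lambda W$, which is not available for this statistic; and the off-the-shelf Chen--Shao local-dependence bounds require genuine conditional independence outside a dependency neighbourhood, which fails because revealing any pairing perturbs the conditional law of all others. Saying that the resulting correction terms ``are lower-order, absorbed into the $n^{1/2}$ term'' is exactly the claim that needs a proof, and your proposal supplies no device for it.

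The paper's route supplies two ideas that are missing from your sketch. First, instead of a one-step switching, one builds for each vertex $v$ a full re-randomized configuration $\mathcal{G}'_v$ that is \emph{exactly independent} of $\mathcal{T}_\ell(v)$: one conditions on the explored set $\xi_v$ (using a stopping-set/strong Markov property for the uniform matching), notes that the complementary sub-configuration is again uniform, and re-inserts the removed pairs one at a time with a calibrated coin toss and bond-breaking step so that uniformity is preserved at every stage. Taking $G=-(n/\sigma)X_I$ and $W'=W'_I$ for a uniform index $I$ gives a Stein coupling in the sense of Chen and R\"ollin, with no linearity condition to verify; the abstract bound $0.8\sqrt{\Var\IE(G\D\mid W)}+\IE|G\D^2|$ then produces the two stated terms. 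Second, the conditional-variance term --- the quantity you flag as the hard estimate --- is not handled by local dependence at all, but by an Efron--Stein inequality: the matching is generated by a uniform permutation $\pi$ of the $m$ half-edges together with the auxiliary randomness $B$ used in the re-insertion, and $\Var f(\pi,B)$ is bounded by summing squared increments over single transpositions of $\pi$ and single resamplings of the $B_v$. Each increment is then controlled by the intersection-probability estimates (how likely a given exploration meets a prescribed colour set), which is where the explicit powers $\ell^{10}$ and $\ell^{8}$ and the numerical constants arise. Without these two ingredients --- the exact-independence coupling and the permutation Efron--Stein bound --- your plan does not close.
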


Theorem~\ref{THM1} can potentially be applied to many graph
statistics: sub-graph counts, the number of small
components and the susceptibility, to name but a few. The upper bounds
on $d_{\max}$ and $\ell$ are largely
unimportant, since the bound given in Theorem~\ref{THM1} would
typically be larger than $1$ if they were
violated. Evaluating the order of the approximation error depends on
obtaining lower bounds on the
variance $\sigma_{d,h}^2$. For the size of the giant component, it is
of strict order $n$; see also
the discussion in Section~\ref{sect-variance}.

\subsection{Size of the giant component}

We now use Theorem~\ref{THM1} to prove our second main result.
For each $n\geq1$, let $d^{(n)}=(d^{(n)}_1,\dots,d^{(n)}_n)$ be
a vector of degrees on $n$ vertices;
let $\pi= (\pi_j)_{j\geq0}$ be a probability distribution on the
nonnegative integers. We denote
by $\pi^{(n)} :=  \{n^{-1}\sum_{i=1}^n \I[d^{(n)}_i=j],
0\leq j\leq n  \}$ the empirical degree distribution
of $d^{(n)}$. Let the random variable $D_n$ have distribution $\pi
^{(n)}$, and let $D$ have
distribution $\pi$. Denote by $D^*$ a random variable having the
size-bias distribution of $D$, that is,
$\IP[D^*=j]=j\pi_j/\IE D$, $j\geq1$, and note that $\IE D^* = \IE
D^*/\IE D$. The following is the main
result of \cite{Ball2017}; all limits are to be understood as $n\toinf$.

\begin{theorem}[{\cite[Theorem~2.1]{Ball2017}}] \label{thm2} Assume that
\ba{
  (1)\enskip& \IE D^* > 2,&
  (2)\enskip& \pi_1>0,&
  (3)\enskip& \IE D^3 <\infty,\qquad\qquad\\[1ex]
  (4)\enskip&\law(D_n)\to\law(D), &
  (5)\enskip&\IE D_n^3\to \IE D^3;\\[1ex]
  (6)\enskip&\rlap{\begin{minipage}[t]{0.85\textwidth}there exists~$\beta>0$ such 
             that~$\dtv\bclr{\law(D^*_n),\law(D^*)}=\bigo\clr{n^{-\beta}}$ and such 
                   that~$\abs{\IE D_n-\IE D} = \bigo\clr{n^{-\beta}}$;\end{minipage}}\\[1ex]
  (7)\enskip&\rlap{\begin{minipage}[t]{0.8\textwidth}there exists~$\delta>0$ such 
             that~$d^{(n)}_{\max} = \bigo(n^{1/4-\delta})$.\end{minipage}}
}
Let~$R_n$ denote the size of the largest component of\/~$\CM\bclr{d^{(n)}}$. Then there exists a positive 
constant~$b^2$ such that, as~$n\toinf$,
\be{
  \Var R_n \sim nb^2.
}
\end{theorem}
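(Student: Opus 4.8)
\emph{Reduction to small components.} Following the strategy of \cite{Ball2017}, I would not work with $R_n$ directly but with the complementary count $S_n:=\#\{v\in[n]:|\cT(v)|\le n^\beta\}$ of vertices in components of size at most $n^\beta$, where $\beta$ is taken positive, at most the constant of assumption~(6), and below~$1/4$ (so that each $X_v:=\I[|\cT(v)|\le n^\beta]$ is a local statistic of the kind appearing in Theorem~\ref{THM1}). Under assumption~(1) the model is supercritical, and its prelimit stays uniformly supercritical for large~$n$ by (4)--(6); so by \cite{Molloy1995}, together with the standard fact that w.h.p.\ there are no components of intermediate size (between~$n^\beta$ and~$\eps n$), one has $R_n=n-S_n+E_n$ with $E_n$ vanishing off an event of small probability. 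Using exponential tail bounds for that ``anomalous component'' event gives $\IE E_n^2=o(n)$, and hence, by Cauchy--Schwarz, $\Var R_n=\Var S_n+o(n)$. It therefore suffices to prove $\Var S_n\sim b^2 n$ with $b^2>0$.

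\emph{A covariance identity and the $O(n)$ upper bound.} Write $\Var S_n=\sum_{v\in[n]}\Cov(X_v,S_n)$. Reveal the component $\cT(v)$: on $\{X_v=1\}$ all of its half-edges are matched internally, so conditionally the induced configuration on $[n]\setminus\cT(v)$ is distributed as $\CM$ of the degree sequence with the vertices of~$\cT(v)$ deleted. Splitting $S_n$ into its parts inside and outside $\cT(v)$ then gives
\[
\Cov(X_v,S_n)=\IE\bigl[X_v\,|\cT(v)|\bigr]+\IE\Bigl[X_v\bigl(\IE[S_n'\mid\cT(v)]-\IE S_n\bigr)\Bigr],
\]
where $S_n'$ is the small-component count of the reduced model. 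Conditionally on being small, $|\cT(v)|$ has uniformly (in~$n$) bounded moments --- this is where assumption~(3), $\IE D^3<\infty$, enters, through finiteness of the variance of the size-biased offspring law $D^*-1$ and the resulting exponential tails of the total progeny that governs small components --- so the first term is $O(1)$; and deleting $\cT(v)$ changes $\IE S_n$ by only $O(|\cT(v)|)$, via a coupling that alters the component membership of just~$O(1)$ vertices per removed half-edge, so the second term is $O(1)$ as well. Hence $\Cov(X_v,S_n)=O(1)$ and $\Var S_n=O(n)$.

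\emph{Identifying the constant.} For the precise asymptotics I would pass to the local weak limit: under (3)--(7) the neighbourhood exploration from a vertex converges to the two-stage Galton--Watson tree $\cT_\pi$ with root offspring law~$\pi$ and subsequent offspring law $\law(D^*-1)$, which is supercritical, since its offspring mean $\IE D^*-1$ exceeds~$1$ by~(1), and finite with probability $\rho\in(0,1)$. Together with the uniform integrability furnished by (3) and (5), this convergence upgrades to convergence of the first and second moments of $X_v|\cT(v)|$ and of the ``derivative'' term $\IE[S_n'\mid\cT(v)]-\IE S_n$ in the identity above; summing over~$v$, grouped by vertex degree (vertices of equal degree being asymptotically exchangeable), then yields $n^{-1}\Var S_n\to b^2$ with $b^2=b^2(\pi)$ an explicit functional of the generating function of~$\pi$ and the extinction probability of~$\cT_\pi$.

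\emph{Positivity of $b^2$: the main obstacle.} The steps above give $\Var S_n=O(n)$ and pin down the candidate constant, but not that it is strictly positive, and ruling out degeneracy is the delicate point. I would try a martingale-difference (equivalently, edge-switching) lower bound. Assumption~(2), $\pi_1>0$, provides $\Theta(n)$ vertices of degree one; such a vertex~$v$ lies in the giant exactly when its unique partner is matched into the giant, and re-matching that partner from a half-edge of the giant to a half-edge of a prescribed bounded small component changes $S_n$ by $\Theta(1)$. Revealing the matching edge by edge, and using the same ``$O(1)$ vertices affected per switch'' estimate as above to control dependence, one should be able to show that a positive fraction of these increments have conditional variances bounded below and that their contributions do not systematically cancel, so that the sum of the conditional variances is at least~$cn$ for some $c>0$; then $\Var S_n\ge cn$ and $b^2\ge c>0$. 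Making ``do not systematically cancel'' quantitative is, I expect, the hardest part of the whole argument; an alternative, trading probabilistic for analytic bookkeeping, is to derive the explicit formula for $b^2(\pi)$ and prove $b^2(\pi)>0$ directly from properties of the relevant generating functions.
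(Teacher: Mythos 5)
The first thing to note is that the paper does not prove this statement: Theorem~\ref{thm2} is quoted verbatim from \cite{Ball2017}, Theorem~2.1, and is used purely as an input to Theorem~\ref{thm3}. So there is no in-paper proof to compare against; your sketch has to be judged against the actual argument of Ball and Neal, which proceeds by a direct second-moment computation. They estimate $\IE[S_n(S_n-1)]$ by exploring the components of two distinct vertices simultaneously, tracking the depletion of the degree sequence as the first exploration proceeds, and arrive at an explicit generating-function expression for $b^2$, whose strict positivity is then checked using $\pi_1>0$. Your reduction to the small-component count $S_n$ and your covariance identity $\Cov(X_v,S_n)=\IE[X_v|\cT(v)|]+\IE[X_v(\IE[S_n'\mid\cT(v)]-\IE S_n)]$ are sound and consistent with that strategy.

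However, two steps in your outline are genuinely incomplete. First and foremost, the positivity of $b^2$ --- which is exactly the content that makes the theorem usable for the CLT; Remark~\ref{rem1} points out that without $\pi_1>0$ the variance can be of smaller order than $n$ --- is not established. You describe a switching/martingale-difference strategy and then concede that the key non-cancellation step is ``the hardest part''; degeneracy of the limiting variance is the central danger in a statement of this type, so this is not a minor loose end. The only route you indicate that actually closes it is your final parenthetical alternative, namely deriving the explicit formula for $b^2(\pi)$ and verifying positivity analytically, which is what Ball and Neal in fact do. Second, the convergence $n^{-1}\Var S_n\to b^2$ is asserted via local weak convergence plus uniform integrability, but the term $\IE[S_n'\mid\cT(v)]-\IE S_n$ is a difference of two global expectations, each of order $n$ and each depending on the entire degree sequence; local weak convergence of neighbourhoods does not by itself control such a difference. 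Making that term converge (uniformly enough to sum over $v$) is precisely where the quantitative hypotheses (6) and (7) --- the $\bigo(n^{-\beta})$ rate for the size-biased laws and the bound on $d^{(n)}_{\max}$ --- must enter, and your sketch does not engage with them. As written, the proposal identifies the right objects but supplies a proof of neither the existence nor the positivity of the limit.
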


\begin{remark} \label{rem1}
Condition (1) is equivalent to the usual threshold condition of \cite
{Molloy1995}, which guarantees the
existence of a giant component with high probability. Condition (2)
ensures that,
under Conditions (3), (4) and~(5), with
high probability not all vertices are either isolated or part of the
same giant component;
if $\pi_1=0$, $\Var R_n$ may be of smaller asymptotic order than $n$.
Condition (3) is
the moment condition of \cite{Ball2017}; Condition (4) is the
condition that the empirical degree
distribution converges; Conditions (5) and (6), respectively, are
equivalent to Conditions (a)(ii) and (b)
of \cite{Ball2017}; Condition (7) is Condition (c) of \cite
{Ball2017}. Moreover, it is straightforward
to check that Conditions~(3), (4) and (5) imply Condition (a)(i) of
\cite{Ball2017}, which is, as is easily
verified, redundant there. Conditions (3), (5), (6) and (7) are
required in their current forms in the
proof given by \cite{Ball2017}, and we use their variance asymptotics
in proving Theorem~\ref{thm3}. We also
need something close to Condition (7) to ensure that the bound in
Theorem~\ref{THM1} is small.
However, the conditions could possibly be relaxed somewhat.
\end{remark}

\begin{theorem} \label{thm3} Assume that the conditions of
Theorem~\ref{thm2} hold.
Let $\hat{R}_{n}$ denote the centred and normalized version of
$R_{n}$. Then
\begin{equation*}
\law(\hat{R}_{n})\to N(0,1) \qquad\text{as $n\toinf$}.
\end{equation*}
\end{theorem}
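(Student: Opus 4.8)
The plan is to apply Theorem~\ref{THM1} to the number of vertices lying in small components, and then to transfer the resulting normal approximation to~$R_n$, exploiting that, with high probability, the largest component of\/ $\CM(d^{(n)})$ is the unique giant while every other component is negligibly small. Concretely, fix a constant~$\beta$ with $0<\beta<\delta/5$, where~$\delta>0$ is the exponent in Condition~(7), set $\ell=\ell_n:=\lceil n^\beta\rceil$, and take $h(\cT):=\I[\,\abs{\cT}<\ell\,]$; since whether a rooted multigraph has fewer than~$\ell$ vertices is decided by its breadth-first exploration of the first~$\ell$ vertices, $h$ only depends on the first~$\ell$ vertices away from the root and $\norm{h}=1$. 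Then $S_n:=U_{d^{(n)},h}=\#\{v\in[n]:\abs{\cT(v)}<\ell_n\}$ counts the vertices in small components; write $\sigma_{S_n}:=(\Var S_n)^{1/2}$, $\sigma_{R_n}:=(\Var R_n)^{1/2}$, and $\hat S_n:=(S_n-\IE S_n)/\sigma_{S_n}$. As a preliminary reduction one may assume that all~$d^{(n)}_i\ge1$, since vertices of degree~$0$ form isolated (hence always small) components, and deleting them changes neither~$R_n$ nor the centred variables $S_n-\IE S_n$ and $R_n-\IE R_n$, while Conditions~(1)--(7) remain in force (indeed $\law(D^*)$ and~$d_{\max}$ are unchanged); in particular $m=\sum_i d^{(n)}_i\ge n$, as required when invoking Theorem~\ref{THM1}.

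The first substantive step is to compare $S_n$ with $n-R_n$. Set $E_n:=S_n-(n-R_n)$. Under Conditions~(1)--(5), $\CM(d^{(n)})$ has with high probability a unique component of size~$\Theta(n)$---necessarily the largest one---while every other component has $\bigo(\log n)$ vertices, hence fewer than~$\ell_n$; on that event $E_n=0$, and on its (rare) complement $\abs{E_n}\le n$. An estimate of exactly this kind, for the threshold~$n^\beta$, is established by \cite{Ball2017}, and adapting it yields $\IE\abs{E_n}\to0$. From $\IE E_n^2\le n\,\IE\abs{E_n}=\lito(n)$ and the identity $n-S_n=R_n+E_n$, the Cauchy--Schwarz inequality gives $\Var S_n=\Var R_n+\lito(n)$; by Theorem~\ref{thm2}, $\Var R_n\sim b^2n$ with $b^2>0$, so $\Var S_n\sim b^2n$ as well. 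In particular $\sigma_{S_n}^2\ge\tfrac12 b^2n$ for~$n$ large, and $\sigma_{S_n}/\sigma_{R_n}\to1$.

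With this variance bound available, Theorem~\ref{THM1} applies to~$d^{(n)}$, $h$ and $\ell=\ell_n$ once~$n$ is large, all of the following holding eventually: $d_{\max}\ge2$ (by Condition~(1), since $\pi_j>0$ for some~$j\ge2$, forcing a vertex of degree~$\ge2$); $d_{\max}=\bigo(n^{1/4-\delta})$, so $d_{\max}\le n^{1/4}$ (Condition~(7)); $12\le\ell_n\le n^{1/4}$ (as $0<\beta<1/20$); and $m\ge n\ge7d_{\max}^2\ell_n$, the last because $7d_{\max}^2\ell_n=\bigo(n^{1/2-2\delta+\beta})$. Substituting $\norm{h}=1$, $d_{\max}=\bigo(n^{1/4-\delta})$, $\ell_n=\bigo(n^\beta)$ and $\sigma_{S_n}^2\ge\tfrac12 b^2n$ into the bound of Theorem~\ref{THM1} makes both of its terms $\bigo(n^{10\beta-2\delta})$, the first dominating, and $10\beta-2\delta<0$ by the choice of~$\beta$; hence $\dw(\law(\hat S_n),\N(0,1))\to0$, so $\law(\hat S_n)\to\N(0,1)$. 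To finish, write $R_n=(n-S_n)+E_n$, giving
\[
  \hat R_n=\frac{R_n-\IE R_n}{\sigma_{R_n}}=-\,\frac{\sigma_{S_n}}{\sigma_{R_n}}\,\hat S_n+\frac{E_n-\IE E_n}{\sigma_{R_n}},
\]
where the last term tends to~$0$ in~$L^1$ (since $\IE\abs{E_n-\IE E_n}\le2\,\IE\abs{E_n}\to0$ while $\sigma_{R_n}\to\infty$) and the coefficient converges to~$-1$; by Slutsky's theorem and the symmetry of the standard normal, $\law(\hat R_n)\to\N(0,1)$.

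The main obstacle, I expect, is the coupling estimate $\IE\abs{E_n}\to0$ at the small threshold~$\ell_n$: it rests on sufficiently sharp control of the component structure of the supercritical configuration model---uniqueness and linear size of the giant component, together with a tail bound on the second-largest one---which is available in the literature and, for the threshold~$n^\beta$, is carried out by \cite{Ball2017}, but which must be aligned carefully with the hypotheses imposed here. The only other point needing attention is the interplay between the exponent~$\delta$ of Condition~(7) and the factor~$\ell^{10}$ in Theorem~\ref{THM1}, which is precisely what forces $\beta<\delta/5$; the remainder of the argument is routine.
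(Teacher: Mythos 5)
Your proposal is correct and follows essentially the same route as the paper: truncate at a small polynomial threshold $\ell=n^{\beta}$ with $\beta$ tied to the $\delta$ of Condition~(7), apply Theorem~\ref{THM1} to the count of vertices in small components using the Ball--Neal variance asymptotics for the lower bound on $\sigma^2$, and transfer to $R_n$ via the whp identification of that count with $n-R_n$ (the paper does this transfer through explicit Wasserstein estimates involving $\N(a_n,b_n^2)$ rather than Slutsky, but the content is the same). Your preliminary removal of degree-zero vertices to guarantee $m\ge n$ is a sensible extra precaution that the paper's proof passes over silently.
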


\begin{proof}
As mentioned in the \hyperref[sec1]{Introduction}, instead of counting the number of
vertices in the giant component, we proceed as
\cite{Ball2017} and count the number of vertices that are not in
the giant component, which is
just $n-R_{n}=:S_{n}$; a CLT for $S_n$ obviously implies a CLT
for $R_n$. Let $h_\ell(\cC)$ be the function
that equals~$1$ if~$ \llvert \cC \rrvert \leq\ell$ and
equals $0$ otherwise;
hence, $U_n:= U_{d^{(n)},h_\ell}$ from \eqref{1}
is the number of vertices in components of size less or equal to $\ell
$ in~$\cG\sim\CM(d^{(n)})$. Under the conditions of
Theorem~\ref{thm3}, it follows from the proof of \cite{Molloy1995},
Lemma~11 (see also the discussion after
Theorem~2.1 of \cite{Ball2017}) that, for any fixed $\delta> 0$,
the probability that a vertex lies in a component larger
than $n^{\delta/10}=:\ell$,
but not in the largest component, is bounded by $Cn^{-2}$, for some
constant $C>0$ independent of $n$; hence,
%
\begin{equation}
\label{2} \IP[S_{n} \neq U_{n}] \leq\frac{C}{n}.
\end{equation}
In what follows, $C$ will denote a generic constant that may differ
from line to line, but is always independent of $n$.
Let $\lambda_{n} := \IE S_{n}$ and let $\tau^2_{n} := \Var S_{n}$, and with~$\mu_n=\IE U_n$ and $\sigma^2_n = \Var U_n$, let
\begin{equation*}
a_n = \frac{\m_{n}-\la_{n}}{\t_{n}}, \qquad b_n^2 =
\frac{\s_{n}^2}{\t_{n}^2}.
\end{equation*}
Since, under the conditions of Theorem~\ref{thm3}, $\sigma_{n}^2\sim
\tau_{n}^2$ (see the discussion after
Theorem~2.1 of \cite{Ball2017}) and since \eqref{2} implies
that $ \llvert \lambda_{n}-\mu_{n}  \rrvert =\bigo(1)$, we have
\begin{equation*}
\llvert a_n \rrvert \leq\frac{C}{\sqrt{n}}, \qquad \bigl\llvert
1-b^2_n \bigr\rrvert = \lito(1).
\end{equation*}
It is a standard exercise in Stein's method for normal approximation to show
that~$d_{\mathrm{W}}(\N(0,1),\N(a,b^2))\leq \llvert  a  \rrvert + \llvert 1-b^2  \rrvert $.
Now
\begin{equation*}
\frac{U_n - \la_n}{\t_n} = {b_n} \biggl(\frac{U_n-\m_n}{\s
_n} \biggr) +
a_n,
\end{equation*}
so that, from the definition of $d_{\mathrm{W}}$,
\begin{equation*}
d_{\mathrm{W}}\bigl(\law\bigl(\t_n^{-1}(U_n-
\la_n)\bigr),\N\bigl(a_n,b_n^2
\bigr)\bigr) = b_n d_{\mathrm{W}}\bigl(\law\bigl(
\s_n^{-1}(U_n-\m_n)\bigr),\N(0,1)
\bigr).
\end{equation*}
Now, let $h$ be a real valued function bounded in modulus by 1 and with
Lipschitz constant at most 1, and
let $Z$ have a standard normal distribution. We have
\begin{align*}
& \bigl\llvert \IE h(\hat{S}_{n}) - \IE h(Z) \bigr\rrvert
\\
& \qquad\leq 2\IP[S_{n} \neq U_{n}] + d_{\mathrm{W}}
\bigl(\law\bigl(\t_n^{-1}(U_n-\la_n)
\bigr),\N\bigl(a_n,b_n^2\bigr)\bigr)
\\
&\qquad\quad{} + d_{\mathrm{W}}\bigl(\N\bigl(a_n,b_n^2
\bigr),\N(0,1)\bigr)
\\
&\qquad \leq C\biggl(\frac{1}{n}+\biggl(\frac{d_{\max}^2\ell
^{10}}{n^{1/2}}+
\frac{d_{\max}^{2}\ell^{8}}{n^{1/2}}\biggr)\biggr) +\lito(1) = \lito(1). %
\end{align*}
Since the set of bounded and Lipschitz continuous functions
characterises convergence in distribution, the claim follows.
\end{proof}

\begin{remark} By conditioning on the multigraph to be simple, results
obtained for the configuration model can
often be transferred to simple graphs (see, e.g., \cite{Janson2010b}).
However, as pointed out
by \cite{Janson2010b}, Remark~1.4, distributional limit results cannot
be transferred in general, so the
question about the fluctuations of the size of the giant component in
simple graphs with a prescribed degree
sequence remains open.
\end{remark}

\section{Technical preliminaries}

We begin by discussing some technical material as a preparation for the
proof of Theorem~\ref{THM1}, which
is presented in the next section. Our main tool is Stein's method for
normal approximation (see \cite{Stein1972}
and \cite{Chen2011}). In particular, we make use of \emph{Stein
couplings}, for which we refer
to \cite{Chen2010b} for a detailed discussion.

\subsection{An abstract normal approximation theorem}

We say that a triple of random variables $(W,W',G)$ is a \emph{Stein
coupling} if
%
\begin{equation}
\label{3} \IE\bigl\{Gf\bigl(W'\bigr)-Gf(W)\bigr\} = \IE\bigl
\{Wf(W)\bigr\}
\end{equation}
for all functions $f$ for which the expectations exists. The proof of
the following result is standard;
see also \cite{Chen2010b}, Corollary~2.2.

\begin{theorem}\label{thm4}
Let $(W,W',G)$ be a Stein coupling with $\Var W = 1$. Then
\begin{equation*}
d_{\mathrm{W}}\bigl(\law(W),\N(0,1)\bigr)\leq0.8\sqrt{\Var\IE(G\D| W)} +
\IE \bigl\llvert G\D^2 \bigr\rrvert ,
\end{equation*}
where $\D=W'-W$.
\end{theorem}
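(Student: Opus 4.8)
The plan is to run Stein's method for normal approximation directly from the Stein coupling, so that only elementary manipulations are required. First I would fix a test function $h\colon\re\to\re$ with Lipschitz constant at most~$1$ and let $f=f_h$ be the standard bounded solution of the Stein equation $f'(w)-wf(w)=h(w)-\IE h(Z)$, where $Z\sim\N(0,1)$; here I would invoke the classical regularity estimates $\norm{f'}\leq\sqrt{2/\pi}<0.8$ and $\norm{f''}\leq2$ (see, e.g., \cite{Chen2011}). The whole task then reduces to bounding $\abs{\IE h(W)-\IE h(Z)}=\abs{\IE\{f'(W)-Wf(W)\}}$ uniformly over such~$h$, and taking the supremum at the end to pass to $\dw$.

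Second, I would read off two auxiliary identities from the defining relation~\eqref{3}. Taking $f\equiv1$ gives $\IE W=0$, and taking the linear function $f(w)=w$ gives $\IE W^2=\IE\{G(W'-W)\}$, that is, $\IE(G\D)=\Var W=1$. Inserting the linear function is legitimate once one notes that the asserted bound is vacuous unless $\IE\abs{G\D^2}<\infty$, which together with $\Var W=1$ forces the relevant expectations to exist. Consequently the $\sigma(W)$-measurable random variable $1-\IE(G\D\given W)$ has mean zero, with variance $\Var\IE(G\D\given W)$.

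Third, I would apply~\eqref{3} with $f=f_h$ and Taylor-expand around~$W$: writing $f(W')-f(W)=\D f'(W)+r$ with $\abs{r}\leq\tfrac12\norm{f''}\D^2\leq\D^2$, one gets $\IE\{Wf(W)\}=\IE\{G\D f'(W)\}+\IE(Gr)$ with $\abs{\IE(Gr)}\leq\IE\abs{G\D^2}$. Hence
\be{
  \IE h(W)-\IE h(Z)=\IE\bigl(f'(W)\,(1-\IE(G\D\given W))\bigr)-\IE(Gr),
}
where the tower property was used to replace $G\D$ by $\IE(G\D\given W)$ inside the first expectation. Since $1-\IE(G\D\given W)$ is centred, the Cauchy--Schwarz inequality bounds the first term by $\sqrt{\IE f'(W)^2}\,\sqrt{\Var\IE(G\D\given W)}\leq0.8\,\sqrt{\Var\IE(G\D\given W)}$; combining this with $\abs{\IE(Gr)}\leq\IE\abs{G\D^2}$ and taking the supremum over all $1$-Lipschitz~$h$ yields exactly the claimed inequality.

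There is no genuine obstacle here: this is the routine Stein-coupling estimate, and the only mild subtlety is the integrability bookkeeping needed to substitute $f(w)=w$ into~\eqref{3}, which is dealt with as above by discarding the trivial case. One could alternatively centre $f'(W)$ before applying Cauchy--Schwarz, replacing $\sqrt{\IE f'(W)^2}$ by $\sqrt{\Var f'(W)}$, but the sup-norm bound $\norm{f'}\leq\sqrt{2/\pi}$ already delivers the constant~$0.8$.
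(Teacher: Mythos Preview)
Your argument is correct and is precisely the standard Stein-coupling computation that the paper has in mind: the paper does not actually prove Theorem~\ref{thm4} but simply declares it standard and points to \cite{Chen2010b}, Corollary~2.2, whose proof proceeds exactly as you outline. The only point worth noting is that your use of centredness of $1-\IE(G\D\given W)$ is solely to identify $\IE\bigl(1-\IE(G\D\given W)\bigr)^2$ with $\Var\IE(G\D\given W)$; the Cauchy--Schwarz step itself needs no centring, so your final parenthetical remark about centring $f'(W)$ is an optional refinement rather than an alternative route.
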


Since Stein couplings are rather abstract, we now present a general
construction that leads to the Stein coupling
that is used in the proof of Theorem~\ref{THM1}. It is not clear how
other couplings that have appeared
in the literature could have been applied. For instance, Stein's
exchangeable pair coupling (see \cite{Stein1986})
requires a certain
linearity condition to be satisfied, which would be difficult to verify
here. The flexibility offered by
Stein couplings is thus important. Moreover, the following
construction, which is a variant of
Construction 2A of \cite{Chen2010b}, reduces the effort needed in
deriving the bounds,
because the $X_i$ appearing in the construction need not be centred.

\begin{lemma}\label{lem1}
Let $X_1,\dots,X_n$ be random variables and let $U=\sum_{i=1}^n X_i$.
Let~$W = \sigma^{-1}(U-\IE U)$,
where $\sigma^2:=\Var U < \infty$. Assume that, for each $i$, we can
construct a random variable $W'_i$ such that $X_i$
and $W'_i$ are independent and such that $\law(W'_i) = \law(W)$.
Let $I$ be a random variable uniformly distributed
on~$[n]$ and independent of all else. Let
\begin{equation*}
G_i = -\frac{n}{\sigma}X_i,\qquad G =
G_I,\qquad W' = W'_I.
\end{equation*}
Then $(W,W',G)$ is a Stein coupling.
\end{lemma}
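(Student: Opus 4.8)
The plan is to verify the defining identity~\eqref{3} for the triple $(W,W',G)$ by unwinding the definitions and exploiting the conditional independence built into the construction. First I would write, for any admissible test function $f$,
\be{
\IE\bigl\{Gf(W')-Gf(W)\bigr\} = \IE\bigl\{G_I f(W'_I) - G_I f(W)\bigr\}
= -\frac{n}{\sigma}\,\IE\bigl\{X_I f(W'_I) - X_I f(W)\bigr\},
}
and then condition on $I$. Since $I$ is uniform on $[n]$ and independent of everything else, taking the expectation over $I$ turns the $n$ in front into an average: $\IE\{X_I g(I,\cdot)\} = \frac1n\sum_{i=1}^n \IE\{X_i g(i,\cdot)\}$ for any (measurable) $g$, so the $n$ and the $\frac1n$ cancel and we are left with
\be{
\IE\bigl\{Gf(W')-Gf(W)\bigr\} = -\frac{1}{\sigma}\sum_{i=1}^n \IE\bigl\{X_i f(W'_i)\bigr\} + \frac{1}{\sigma}\sum_{i=1}^n \IE\bigl\{X_i f(W)\bigr\}.
}

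Next I would treat the two sums separately. For the first sum, the key point is the hypothesis that $X_i$ and $W'_i$ are independent and that $\law(W'_i)=\law(W)$; hence $\IE\{X_i f(W'_i)\} = \IE(X_i)\,\IE f(W'_i) = \IE(X_i)\,\IE f(W)$ for each $i$, so
\be{
-\frac{1}{\sigma}\sum_{i=1}^n \IE\bigl\{X_i f(W'_i)\bigr\} = -\frac{1}{\sigma}\,\IE f(W)\sum_{i=1}^n \IE X_i = -\frac{1}{\sigma}\,\IE(U)\,\IE f(W).
}
For the second sum, I would simply pull $f(W)$ out of the sum over $i$ (it does not depend on $i$) to get $\frac{1}{\sigma}\IE\{U f(W)\}$. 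Combining the two gives $\IE\{Gf(W')-Gf(W)\} = \frac1\sigma \IE\bigl\{(U-\IE U) f(W)\bigr\} = \IE\{W f(W)\}$, using $W = \sigma^{-1}(U-\IE U)$. This is precisely~\eqref{3}, so $(W,W',G)$ is a Stein coupling.

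There is no serious obstacle here; the argument is essentially bookkeeping. The only point that warrants a word of care is the interchange of the expectation over $I$ with the expectations defining the various terms, and the use of the factorization $\IE\{X_i f(W'_i)\} = \IE(X_i)\IE f(W)$, both of which are justified by the stated independence assumptions together with the (implicit, and in the application easily checked) integrability of the quantities involved; $f$ is taken from the class for which all expectations in~\eqref{3} exist, and since $f$ will ultimately be bounded and Lipschitz and the $X_i$ have finite variance, there is nothing delicate. I would end the proof here.
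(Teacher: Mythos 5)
Your proof is correct and follows essentially the same route as the paper's: expand the expectation over the uniform index $I$ to cancel the factor $n$, use the independence of $X_i$ and $W'_i$ together with $\law(W'_i)=\law(W)$ to factorize $\IE\{X_i f(W'_i)\}=\IE(X_i)\,\IE f(W)$, and recombine to get $\sigma^{-1}\IE\{(U-\IE U)f(W)\}=\IE\{Wf(W)\}$. The only cosmetic difference is that you make the conditioning on $I$ explicit, whereas the paper computes $\IE\{Gf(W)\}$ and $\IE\{Gf(W')\}$ separately before subtracting.
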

\begin{proof}
Indeed,
\begin{equation*}
\IE\bigl\{Gf(W)\bigr\} = -\frac{1}{\sigma}\sum_{i=1}^n
\IE\bigl\{X_if(W)\bigr\}
\end{equation*}
and
\begin{equation*}
\IE\bigl\{Gf\bigl(W'\bigr)\bigr\} = -\frac{1}{\sigma}\sum
_{i=1}^n \IE\bigl\{X_i f
\bigl(W'_i\bigr)\bigr\} = -\frac{1}{\sigma}\sum
_{i=1}^n \IE X_i \IE f(W),
\end{equation*}
so that
\begin{equation*}
\IE\bigl\{Gf\bigl(W'\bigr)-Gf(W)\bigr\} = \frac{1}{\sigma}\sum
_{i=1}^n \IE\bigl\{(X_i - \IE
X_i)f(W)\bigr\} = \IE\bigl\{Wf(W)\bigr\},
\end{equation*}
establishing \eqref{3}.
\end{proof}

Broadly speaking, the difference $W-W'_i$ measures how much the sum $W$
needs to be changed in order to become
independent of its summand $X_i$. If this difference is small, the
influence of $X_i$ on $W$ is weak. Randomizing
over the summands, we can similarly argue that if $W-W'_I$ is small,
then the influence of most $X_i$ on $W$ is weak,
indicating that $W$ is a sum of weakly dependent random variables and,
therefore, making a CLT for $W$ plausible.

\subsection{Configurations}

Instead of working with multigraphs, we follow the standard procedure
and work instead with matchings of
coloured balls, where the colours represent the individual vertices,
and the balls represent the
half-edges coming out of each vertex. For simplicity, we represent the
colours by the numbers in $[n]$,
and we represent the labels of the balls by the numbers in $[m]$.
Let~$d =(d_i)_{1\leq i \leq n}$ be a
vector of degrees, so that $m = \sum_i d_i$ is even. One may think of
balls $1$
to $d_1$ having colour 1, balls $d_1+1$ to $d_1+d_2$ having colour 2,
and so forth. A~configuration $\cG$ on
this set of balls is simply a perfect matching of the balls (note that
a perfect matching consists of an unordered set
of $m/2$ nonoverlapping pairs); denote by $\mathcal{M}_{0}(d)$ the set
of all
such perfect matchings,
and denote by $\cU_{0}(d)$ the
uniform distribution on $\mathcal{M}_{0}(d)$ (the reason for the ``0''
in the
notation becomes clear in the next paragraph).
Now, in this notation, saying that $\cG$ is a configuration random
graph $\CM(d)$
is equivalent to saying that $\cG\sim\cU_0(d)$.

We also need to consider sub-configurations of $\cG$, and these may
contain \emph{unpaired} balls.
To make this precise, let $C\subset[n]$ be a subset of colours. We
denote by $\cG|_C$ the sub-configuration
of $\cG$ restricted to the set of balls which have any of the colours
in $C$. This comprises the pairs
for which the colours of \emph{both} balls are in $C$, together with
unpaired balls, with colours in $C$,
whose partners in~$\cG$ have colour in $C^c$.
Let $s(C)$ be the number of unpaired
balls in $\cG|_C$; clearly, $s(C)=s(C^c)$, since in the base
configuration $\cG$ all balls are matched. Moreover,
denote by $d(C) = (d_1(C),\dots,d_n(C))$ the degree sequence of
length $n$, now restricted to the colours in $C$,
so that $d_v(C) = d_v$ if $v\in C$ and $d_v(C) = 0$ if $v\notin C$.
Finally, for any fixed $s$, let $\mathcal{M}_{s}(d(C))$
denote the set of matchings of the balls having colours in $C$,
with $s$ balls left unpaired, and denote
by $\cU_s(d(C))$ the uniform distribution on $\mathcal{M}_{s}(d(C))$.

The following lemma is a consequence of the uniform distribution on matchings.

\begin{lemma}\label{lem2} Let $C\subset[n]$ be a fixed subset of
colours, and let $\cG\sim\urn_0(d)$.
Then, conditionally on $s(C)$, we have that
$\cG|_C$ and $\cG|_{C^c}$ are independent with
%
\begin{equation}
\label{4} \cG|_C \sim\urn_{s(C)}\bigl(d(C)\bigr) \quad
\text{and}\quad \cG|_{C^c} \sim\urn_{s(C)}\bigl(d
\bigl(C^c\bigr)\bigr).
\end{equation}
\end{lemma}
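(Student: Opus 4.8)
The plan is to argue via a direct counting/symmetry argument on the uniform distribution over perfect matchings. Fix $C \subset [n]$ and recall that $\cG \sim \urn_0(d)$ is a uniformly chosen perfect matching of the $m$ balls. First I would decompose any perfect matching $M \in \mathcal{M}_0(d)$ according to its ``trace'' on $C$: the restriction $M|_C$ consists of all pairs both of whose balls have colours in $C$, together with a record of which balls in $C$ are matched across to $C^c$ (these become the unpaired balls of $\cG|_C$); symmetrically $M|_{C^c}$ records the pairs internal to $C^c$ and the unpaired balls there. The key combinatorial observation is that the data $(M|_C, M|_{C^c})$ does \emph{not} determine $M$: given the set $A$ of $s$ unpaired balls in $C$ and the set $B$ of $s$ unpaired balls in $C^c$, the number of matchings $M$ with that trace equals the number $s!$ of bijections $A \to B$ used to splice the two halves together. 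Crucially, this multiplicity $s!$ depends on the traces \emph{only through the common value $s = s(C) = s(C^c)$}, and not otherwise on the internal structure of $M|_C$ or $M|_{C^c}$.

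Next I would turn this into the statement about conditional laws. For a fixed value $s$, the set of matchings $M$ with $s(C) = s$ is in bijection with triples $(M_1, M_2, \psi)$ where $M_1 \in \mathcal{M}_s(d(C))$, $M_2 \in \mathcal{M}_s(d(C^c))$, and $\psi$ is a bijection between the $s$ unpaired balls of $M_1$ and those of $M_2$; moreover the choice of which $s$-subset of the $C$-balls is unpaired and which $s$-subset of the $C^c$-balls is unpaired is already encoded in $M_1$ and $M_2$ respectively. Since $\urn_0(d)$ is uniform on $\mathcal{M}_0(d)$, conditioning on $\{s(C) = s\}$ gives the uniform distribution on this set of triples. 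Uniformity on a product set $\mathcal{M}_s(d(C)) \times \mathcal{M}_s(d(C^c)) \times \{\text{bijections}\}$ factorises: the marginals are independent, $M_1$ is uniform on $\mathcal{M}_s(d(C))$, i.e. $\cG|_C \sim \urn_s(d(C))$, and likewise $\cG|_{C^c} \sim \urn_s(d(C^c))$, which is exactly \eqref{4}.

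The main point requiring care — the only real obstacle — is verifying that the fibre size is exactly $s!$ and depends on nothing but $s$; once that is pinned down, the independence and the identification of the marginals are immediate from uniformity on a product. One should also check the degenerate edge cases ($C = \emptyset$ or $C = [n]$, and $s = 0$) to be sure the bijection with triples is literally correct there, but these are routine. I would present the argument in the form: for a fixed $s$, and arbitrary $M_1 \in \mathcal{M}_s(d(C))$, $M_2 \in \mathcal{M}_s(d(C^c))$,
\begin{equation*}
\IP\bclr{\cG|_C = M_1,\ \cG|_{C^c} = M_2 \given s(C) = s}
  = \frac{s!}{\abs{\mathcal{M}_s(d(C))}\,\abs{\mathcal{M}_s(d(C^c))}\, s!}
  = \frac{1}{\abs{\mathcal{M}_s(d(C))}}\cdot\frac{1}{\abs{\mathcal{M}_s(d(C^c))}},
\end{equation*}
which simultaneously exhibits the product form (hence independence) and the two uniform marginals.
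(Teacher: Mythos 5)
Your argument is correct, and the key point is exactly the one that needs to be made: the number of perfect matchings of all $m$ balls restricting to a given pair $(M_1,M_2)$ of traces is $s!$, depending on the traces only through $s$, so that conditioning on $\{s(C)=s\}$ yields the uniform distribution on a product set, giving both the independence and the two uniform marginals at once. The paper gives no proof of this lemma at all — it merely remarks that it "is a consequence of the uniform distribution on matchings" — and your counting argument is precisely the standard justification that remark is alluding to.
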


In other words, the lemma says that, given $s(C)$, the two
sub-configurations $\cG|_C$ and $\cG|_{C^c}$
are independent and themselves uniformly distributed, but on different
vectors of degrees,
and with unpaired balls if $s(C)>0$.

\subsection{Truncated components}

The key assumption in Theorem~\ref{THM1} is that the function $h$ only
depends on the first $\ell$ vertices away
from the root. It is therefore enough to explore the component for each
vertex breadth first, up to the point
where $\ell$ vertices have been explored---we denote this truncated
component by~$\cT_\ell(v)$.

\bigskip

\noindent\textit{Algorithm A}

\begin{enumerate}
\item Let $j\leftarrow1$, let $C_1 \leftarrow\{v\}$, and let $\cS_1
\leftarrow\cG|_{\{v\}}$.
\item\label{algorithmb1} If $j=\ell$ or if the
sub-configuration $\cS_j$ has no unpaired balls, then
proceed to Step~\ref{algorithmb4}.
\item\label{algorithmb2} Among all the unpaired balls in $\cS_j$,
take the one with the \emph{smallest ball label}
from the colour with the \emph{smallest colour label} among the
colours closest to $v$, reveal its partner,
and denote by $w$ the colour of that partner.
\item Let $j\leftarrow j+1$.
\item\label{algorithmb3} Let $C_j \leftarrow C_{j-1}\cup\{w\}$, and
let $\cS_j \leftarrow\cG|_{C_j}$.
\item Return to Step~\ref{algorithmb1}.
\item\label{algorithmb4} Let $\cT_\ell(v)\leftarrow\cS_{j}$, and stop.
\end{enumerate}
%

%

\begin{remark} A few comments are in order.
\begin{enumerate}
\item With $ \llvert \cT_\ell(v)  \rrvert $ denoting the
number of colours
in $\cT_\ell(v)$, we have $ \llvert \cT_\ell(v)  \rrvert
\leq\ell$.
\item At any stage $j$ in the algorithm, each of the unpaired balls
in $\cS_j$ is paired (from the viewpoint of $\cG$)
with a colour not yet in $\cS_j$, since by the definition of
sub-configurations, once a new colour is added, all loops
and all pairings with colours already in the configuration are
automatically revealed.
\item It is not difficult to see that $\cT_\ell(v)$ contains unpaired
balls if and only if $\cT(v)$ contains
strictly more than $\ell$ vertices.
\item Under the assumptions on $h$, it is clear that $h(\cT(v)) =
h(\cT_\ell(v))$; hence
\begin{equation*}
U_{d,h} = \sum_{v\in[n]} h\bigl(
\cT_\ell(v)\bigr).
\end{equation*}
\end{enumerate}
\end{remark}

\begin{remark}\label{rem20} We need to be able to condition the
configuration $\cG$ on the realization of $\cT_\ell(v)$,
or rather on the set of colours in $\cT_\ell(v)$ along with the
number of unpaired balls in $\cT_\ell(v)$. To shorten notation, we
let $\xi_v$ denote the set of colours contained in $\cT_\ell(v)$.
Then, in order to construct the couplings of Lemma~\ref{lem1}, we
shall use statements of the form
%
\begin{equation}
\label{5} \law\bigl(\cG|_{\xi_v^c}\big|\xi_v,s(
\xi_v)\bigr) = \urn_{s(\xi
_v)}\bigl(d\bigl(\xi_v^c
\bigr)\bigr),
\end{equation}
which is the analogue of the strong Markov property, but which does not
follow immediately
from Lemma~\ref{lem2}. However, \eqref{5} can be rigorously
established, and has in fact been
used implicitly in the literature many times. We refer to \cite
{Rozanov1982} for the general theory of
\emph{stopping sets}; to establish \eqref{5}, we refer in particular
to \cite{Rozanov1982}, Lemma~1, p. 75.
\end{remark}

\subsection{Stein coupling}

We proceed to construct the Stein coupling necessary for the proof of
the main theorem. In order to
shorten notation, we consider $n$, $\ell$ and the degree sequence $d$
to be fixed, and throughout the remainder
of the article, we let $\xi_v$ denote the set of colours contained in
$\cT_\ell(v)$ as before; in particular, note
that~$ \llvert \xi_v  \rrvert \leq\ell$.

Let $\cG\sim\urn_0(d)$. Let $X_v = h(\cT_\ell(v))$ for
every $v\in[n]$, and let $W=\sum_{v\in[n]} X_v$.
Consider $v$ fixed for now. Starting from $\cG$, we construct a new
configuration $\cG'_v$ in such a
way that $\cG_v'$ is independent of $\cT_\ell(v)$ and, therefore,
independent of $X_v$.
We establish independence by showing that the conditional distribution
of $\cG'_v$, given~$\cT_\ell(v)$, does not
depend on $\cT_\ell(v)$; specifically,  we now show that we have~$\law(\cG
_v'|\cT_\ell(v))=\law(\cG)$.

We start with the sub-configuration $\cG|_{\xi_v^c}$. By \eqref{5},
$\cG|_{\xi_v^c}$ contains $s(\xi_v)$ unpaired balls,
which, conditionally on $\cT_\ell(v)$, are uniformly distributed
among the balls in~$\cG|_{\xi_v^c}$. We can thus add
all unpaired balls from $\cG|_{\xi_v}$ into $\cG|_{\xi_v^c}$, and
pair them with the partners that
they were already paired with in $\cG$; denote the resulting
sub-configuration by $\cG'_{v,0}$.

In order to add the remaining balls to $\cG'_{v,0}$, we proceed
step-wise, one pair at a time.
In what follows, we will make random choices; so denote by $B_v$ a
suitable source of random numbers
(e.g., $B_v$ could just be a sequence of independent uniform random
variables), in such a way that the choices
made are a deterministic function of $B_v$.

Let $K_v$ denote the number of pairings among the remaining balls, and,
if~\mbox{$K_v>0$}, repeat the following
procedure $K_v$ times (if $K_v=0$, there is nothing to be done). The
result is a
sequence $\cG'_{v,0},\cG'_{v,1},\dots,\cG'_{v,K_v}$ with the
property that the matchings
in $\cG'_{v,k}$ are uniformly distributed among all matchings between
the respective balls;
in particular, $\cG'_{v,K_v}\sim\cU_0(d)$, irrespective of $\cT
_\ell(v)$.

\paragraph{Constructing $\boldsymbol{{\cG'_{v,k}}}$
from $\boldsymbol{\cG'_{v,k-1}}$ for $\boldsymbol{1\leq k\leq K_v}$.}
Among the balls not in $\cG'_{v,k-1}$, pick a pair of matched balls.
Independently of all else, toss a coin that shows heads with
probability $1/(m-2(K_v-k)-1)$ and that shows tails with
probability \mbox{$(m-2(K_v-k)-2)/(m-2(K_v-k)-1)$}. If the coin shows heads,
add the two balls to $\cG'_{v,k-1}$, match them with each other and
denote the resulting configuration by $\cG'_{v,k}$.
If the coin shows tails, pick one ball from $\cG'_{v,k-1}$ uniformly
at random, call it $I$ and call its partner~$J$,
and break up the bond between $I$ and $J$. Add one of the balls to be
added to~$\cG'_{v,k-1}$ and match it with $I$,
and then add the remaining ball to $\cG'_{v,k-1}$ and match it
with~$J$; denote the resulting configuration by $\cG'_{v,k}$.

It is not difficult to convince oneself that the matchings in
each $\cG'_{v,k}$ are uniformly distributed.
Assuming that they are for $\cG'_{v,k-1}$, we add a pair of balls, and
leave them paired with probability $1/(m-2(K_v-k)-1)$;
this is exactly the probability that two randomly chosen balls are
matched in $\cG'_{v,k}$. Otherwise, we match each of the two
balls with a ball randomly chosen from $\cG'_{v,k-1}$. The way to do
this is to pick a
random pair from $\cG'_{v,k-1}$, break it up, and pair the two balls
individually with the balls that have just been added.

We need to keep track of the colours involved in the construction.
Let $\HHH_v$ denote the union of the colours of the balls $I$ and $J$
picked in the steps $1\leq k\leq K_v$; that is,~$\HHH_v$ consists of the colours of all the balls used in
applying the above construction. Then define $\eta_v:= \xi_v \cup
\HHH_v$.
Note that $\xi_v$ is a deterministic function of~$\cG$, whereas $\eta
_v$ is a
deterministic function of both $\cG$ and $B_v$; moreover, \eqref{5}
also holds if $\xi_v$ is replaced
by $\eta_v$ throughout.

Finally, let $\cG_v' := \cG'_{v,K_v}$, which is the configuration
obtained after all balls have been put back. As mentioned before,
conditionally on $\cT_\ell(v)$, $\cG_v'$ is distributed as~$\urn_0(d)$;
hence, $\cG_v'$ is independent of $\cT_\ell(v)$, and hence,
independent of $X_v$, since the latter is a function of $\cT_\ell(v)$.

Now, for each $w\in[n]$, let $\cT^v_\ell(w)$ denote the truncated
component of $w$ in $\cG_v'$, that is,
the truncated component obtained by applying Algorithm A in $\cG_v'$,
starting with $w$; let
\begin{equation*}
W_v' = \frac{1}{\sigma_{d,h}} \Biggl(\sum
_{w=1}^n h\bigl(\cT^v_\ell(w)
\bigr)-\mu_{d,h}\Biggr).
\end{equation*}
Since $W_v'$ is independent of $X_v$ and has the same distribution
as $W$, we apply Lemma~\ref{lem1} to obtain a Stein coupling
$(W,W',G)$.

\section{Proof of Theorem \protect\ref{THM1}}

As a first step, we show that $K_v-\ell$ is bounded with high
probability, uniformly in $v\in[n]$,
as long as $d_{\max}\ell$ grows no faster than a small power of $m$.
\begin{lemma}\label{K_v-bound}
For $k\geq1$ and $m \geq8(k\vee \ell)$,
\begin{equation*}
\IP[K_v \geq\ell+k-1] \leq\frac{d_{\max}^{2k}\ell^{2k}}{k!m^k}.
\end{equation*}
In particular, if $m \geq n$,
\begin{equation*}
\IP\bigl[K_v \geq\ell+ k - 1\text{ for some }v\in[n] \bigr] \leq
\frac{d_{\max}^{2k}\ell^{2k}}{k!m^{k-1}}.
\end{equation*}
\end{lemma}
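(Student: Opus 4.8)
The plan is to bound $\IP[K_v \geq \ell + k - 1]$ by a union bound over the possible sets of colours that can be revealed during the construction. Recall that $K_v$ counts the number of pairs that must be put back when building $\cG'_v$, and these pairs involve the colours in $\HHH_v$, the colours of the balls $I,J$ broken up in successive steps. The key observation is that the construction only ever touches balls whose colour lies in $\xi_v \cup \HHH_v = \eta_v$, and that $\eta_v$ grows by at most one colour per step; so if $K_v$ is large, then a large set of colours must have been ``pulled in'' by a chain of pairings starting from $\xi_v$, which has size at most $\ell$. Each such colour is pulled in because one of its balls got matched (in $\cG$) to a ball of a colour already in the current set, and the probability of any specified such matching is at most $1/(m - O(\ell + k))$, which under $m \geq 8(k \vee \ell)$ is at most $2/m$, say; more carefully one gets a bound like $1/(m-1)$ against each of the at most $d_{\max}\cdot|\text{current colour set}|$ candidate balls.

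Concretely, I would first fix $\cT_\ell(v)$ and hence $\xi_v$ with $|\xi_v| = t \leq \ell$ and $s := s(\xi_v)$ unpaired balls; then the event $\{K_v \geq \ell + k - 1\}$ forces at least $k$ extra colours to be absorbed. I would enumerate the ordered sequence of the $k$ new colours $w_1, \dots, w_k$ together with, for each, the identity of the ball in the then-current configuration it attaches to. At the step that introduces the $i$-th new colour, the current colour set has size at most $t + i - 1 \leq \ell + i - 1$, hence at most $d_{\max}(\ell + i - 1) \leq d_{\max}\ell \cdot i$ balls (crudely $\leq d_{\max}\ell\, i$ once we absorb constants, using $\ell \geq 1$), and the matched partner of any particular one of these balls is a uniform choice among the remaining $\geq m - 2(\ell + k) \geq m/2$ balls (here is where $m \geq 8(k\vee \ell)$ enters, with room to spare), so each such matching has probability at most $2/m$. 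Multiplying over the $k$ steps and summing over the $\leq \prod_{i=1}^k (d_{\max}\ell\, i) = d_{\max}^k \ell^k k!$ choices of (new colour, attaching ball) sequences gives $\IP[K_v \geq \ell + k - 1 \mid \cT_\ell(v)] \leq d_{\max}^k \ell^k k! \cdot (2/m)^k / (\text{symmetry factor})$; the $k!$ from the number of orderings of the $k$ absorbed colours cancels against a $1/k!$, and a factor $2^k$ against the constant slack, leaving $d_{\max}^{2k}\ell^{2k}/(k!\,m^k)$ after one also accounts for the second ball of each pair (another factor $d_{\max}\ell$-type term) — the exact bookkeeping of which combinatorial factor lands where is what must be done carefully, but the shape is forced. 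Since this bound is uniform in the conditioning, it holds unconditionally.

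The second statement is then immediate from a union bound over $v \in [n]$: $\IP[K_v \geq \ell + k - 1 \text{ for some } v] \leq n \cdot d_{\max}^{2k}\ell^{2k}/(k!\,m^k) \leq d_{\max}^{2k}\ell^{2k}/(k!\,m^{k-1})$ using $m \geq n$.

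The main obstacle, I expect, is getting the counting of ``candidate attaching balls'' and the associated product telescoping to land exactly on $d_{\max}^{2k}\ell^{2k}/(k!\,m^k)$ rather than something with extra polynomial or constant factors: one has to be careful that each of the $k$ newly absorbed colours contributes a factor of order $d_{\max}\ell$ to the number of choices and a factor $1/m$ to the probability, that the two balls in each new pair are handled without double-counting, and that the $\prod i = k!$ from enumerating which ``slot'' each colour fills is exactly the $k!$ appearing in the denominator. The constant $8$ in $m \geq 8(k\vee\ell)$ is presumably tuned precisely so that $m - 2(\ell+k) \geq m/2$ (or a similar clean inequality) absorbs all the stray factors of $2$; I would choose the slack in the probability bound to match whatever constant is declared.
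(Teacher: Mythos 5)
There is a genuine gap here: you have misread what the event $\{K_v \geq \ell+k-1\}$ asserts. By construction, $K_v$ is the number of matched pairs both of whose balls have colours in $\xi_v$, i.e.\ the number of edges of the multigraph induced on the at most $\ell$ explored vertices; it is a deterministic function of $\cT_\ell(v)$ and has nothing to do with $\HHH_v$ or $\eta_v$, which only arise afterwards from the reinsertion randomness $B_v$. Consequently, conditioning on $\cT_\ell(v)$ as you propose makes the event degenerate, and the event certainly does not ``force at least $k$ extra colours to be absorbed'': the colour set $\xi_v$ never exceeds $\ell$ vertices, by the stopping rule of Algorithm~A. What the event does force is at least $k$ \emph{surplus} pairings: Step~3 of Algorithm~A reveals at most $|\xi_v|-1\leq\ell-1$ pairs (one per newly added colour), so $K_v\geq\ell+k-1$ means at least $k$ further pairs are present among the at most $d_{\max}\ell$ balls carrying explored colours (loops, multiple edges and cycle-closing edges revealed at Step~5). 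Your chain-of-absorbed-colours computation therefore bounds the probability of a different event --- essentially that the exploration reaches many vertices --- which in the supercritical regime is not small, and no rearrangement of the $d_{\max}\ell$ and $1/m$ factors will convert it into a bound on $\IP[K_v\geq\ell+k-1]$.

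The correct route, and the one the paper takes, is a first-moment bound on the number of $k$-tuples of surplus pairs: there are at most $\binom{\binom{d_{\max}\ell}{2}}{k}\leq (d_{\max}\ell)^{2k}/(2^k k!)$ candidate $k$-sets of pairs of balls, any $k$ given pairs are simultaneously matched with probability at most $\bigl\{\prod_{j=1}^k(m-2\ell-2j+3)\bigr\}^{-1}$, and the hypothesis $m\geq 8(k\vee\ell)$ guarantees $2^k\prod_{j=1}^k(1-2\ell/m-2j/m+3/m)\geq 1$, which yields exactly $d_{\max}^{2k}\ell^{2k}/(k!\,m^k)$; Markov's inequality applied to this count of $k$-tuples finishes the first claim. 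Your union bound over $v\in[n]$ for the second claim is correct and coincides with the paper's.
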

\begin{proof}At Step~3,
Algorithm A reveals the partner of an unmatched ball; there are at
most $\ell-1$
of pairings revealed in this manner. In addition, at Step~5, other
pairs may be revealed; if $w$ denotes the colour of the
partner revealed at Step~3, then some of its remaining $d_w-1$ balls
may be paired with unpaired
balls having the previously chosen colours. At any stage, there are no
more than $ d_{\max}\ell$
unpaired balls. The chance of any two given balls being paired
is at most $1/(m-2\ell+1)$, and the chance that any $k$
given sets of two balls are each paired is $ \{\prod_{j=1}^k
(m-2\ell-2j+3) \}^{-1}$. Hence the expected number of
$k$-tuples of matched pairs in $\cT_\ell(v)$, other than those
revealed at Step~3, is at most
\begin{equation*}
\begin{split}
&\binom{\binom{d_{\max}\ell}{2}} {k} \frac{1}{\prod_{j=1}^k (m-2\ell-2j+3)} \\
&\qquad\leq \binom{d_{\max}\ell}{2}^k\frac{1}{k!\prod_{j=1}^k (m-2\ell-2j+3)}\\
&\qquad\leq \frac{d_{\max}^{2k}\ell^{2k}}{k!2^k\prod_{j=1}^k (m-2\ell-2j+3)}\\
&\qquad\leq \frac{d_{\max}^{2k}\ell^{2k}}{k!m^k 2^k\prod_{j=1}^k (1-2\ell/m-2j/m+3/m)}
        \leq \frac{d_{\max}^{2k}\ell^{2k}}{k!m^k};
\end{split}
\end{equation*}
the fact that $2^k\prod_{j=1}^k (1-2\ell/m-2j/m+3/m)\geq 1$ follows from the fact that $(1-2\ell/m-2j/m+3/m)\geq 1/2$ under the given assumptions. The two claims now easily follow.
\end{proof}

In what follows, we define the event $A$ by
%
\begin{equation}
\label{A_n} A := \bigl\{K_v \leq\ell+ 6\text{ for all }v\in[n]
\bigr\},
\end{equation}
and write
%
\begin{equation}
\label{gamma_n} \g:= \IP\bigl[A^c\bigr] \leq\frac{d_{\max}^{16}\ell^{16}}{8!m^7},
\end{equation}
where the upper bound in \eqref{gamma_n} follows directly from
Lemma~\ref{K_v-bound} with $k=8$.

\subsection{Bounds on intersection probabilities}

\begin{lemma}\label{lem3} Let $\cG\sim\cU_0(d)$, and assume that a
random set of colours $\alpha$ has been
obtained, perhaps using $\cG$, but in such a way that
\begin{equation*}
\law\bigl(\cG|_{\alpha^c}\big|\alpha,s(\alpha)\bigr) = \cU _{s(\alpha)}
\bigl(d\bigl(\alpha^c\bigr)\bigr).
\end{equation*}
Then, for every $v\in[n]$,
%
\begin{align}
\IP\bigl[\xi_v \cap\alpha\neq\emptyset\big|\alpha,s(\alpha)\bigr] &
\leq I_{v\in\alpha}+\frac{2d_{\max}\llvert\alpha\rrvert(\ell-1)}{m} \label{6}
\\
\intertext{and} \IE\bigl\{ \llvert \xi_v \cap\alpha \rrvert \big|\a
,s(\alpha )\bigr\} & \leq \ell I_{v\in\alpha} + \frac{2d_{\max}\llvert\alpha\rrvert(\ell-1)}{m},
\label{6.5}
\end{align}
whenever $m\geq2d_{\max}( \llvert \alpha \rrvert +\ell)$.
Similarly, we have
%
\begin{align}
\IP\bigl[\eta_v\cap\alpha\neq\emptyset,A\big|\alpha,s(\alpha)\bigr]
& \leq I_{v\in\alpha}+\frac{2d_{\max}\llvert\alpha\rrvert(3\ell+11)}{m}, \label
{7}
\\
\intertext{and} \IE\bigl\{ \llvert \eta_v \cap\alpha \rrvert
I_A \big|\a ,s(\alpha)\bigr\} & \leq (3\ell+ 12) I_{v\in\alpha} +
\frac{2d_{\max}\llvert\alpha\rrvert(3\ell+11)}{m}. \label{7.5}
\end{align}
\end{lemma}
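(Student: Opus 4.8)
The plan is to prove the four inequalities \eqref{6}, \eqref{6.5}, \eqref{7}, \eqref{7.5} by explicitly running Algorithm~A (and, for \eqref{7} and \eqref{7.5}, the ball-reinsertion construction) step by step, and at each step bounding the conditional probability that the newly revealed colour (or reinserted ball's colour) lands in~$\alpha$. The key structural fact I would use repeatedly is the Markov-type property: conditionally on $\alpha$ and $s(\alpha)$, the sub-configuration $\cG|_{\alpha^c}$ is uniform on $\cU_{s(\alpha)}(d(\alpha^c))$, and---crucially---since Algorithm~A only reveals pairings one at a time, after having revealed colours $C_1,\dots,C_j$ with $C_j\cap\alpha=\emptyset$ so far, the conditional law of the configuration restricted to the still-unrevealed colours (intersected with $\alpha^c$) is still uniform. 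So when Step~3 reveals the partner of an unpaired ball, the partner is an (almost) uniformly chosen ball among those not yet in $\cS_j$; the number of balls with a colour in $\alpha$ among the unrevealed balls is at most $d_{\max}|\alpha|$, and the total number of unrevealed balls is at least $m - 2d_{\max}\ell$ (crudely, since at most $\ell$ colours, each of degree $\le d_{\max}$, have been touched, contributing at most $d_{\max}\ell$ balls, and we also discount the $s$ unpaired balls, giving the hypothesis $m\ge 2d_{\max}(|\alpha|+\ell)$ room). Hence each of the at most $\ell-1$ partner-revelations in Algorithm~A hits $\alpha$ with conditional probability at most $d_{\max}|\alpha|/(m - 2d_{\max}\ell) \le 2d_{\max}|\alpha|/m$. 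A union bound over the $\le\ell-1$ such steps gives the second term in \eqref{6}; the $I_{v\in\alpha}$ term covers the possibility that the root itself is already in $\alpha$. For \eqref{6.5}, instead of a union bound I would sum expectations: if $v\in\alpha$, then $|\xi_v\cap\alpha|\le|\xi_v|\le\ell$, giving $\ell I_{v\in\alpha}$; the contribution from each of the $\le\ell-1$ newly revealed colours is at most its probability of being in $\alpha$, i.e. again $\le 2d_{\max}|\alpha|/m$ apiece, summing to $2d_{\max}|\alpha|(\ell-1)/m$.

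For \eqref{7} and \eqref{7.5}, the set $\eta_v = \xi_v\cup\HHH_v$ additionally contains the colours of all balls $I,J$ broken up and re-paired during the $K_v$-step reinsertion construction. On the event $A$ we have $K_v\le\ell+6$, so the reinsertion touches at most $2K_v\le 2\ell+12$ additional balls (the balls $I$ and $J$ in each "tails" outcome), hence at most $2\ell+12$ additional colours beyond those of $\xi_v$; combined with $|\xi_v|\le\ell$ this already gives the $(3\ell+12)I_{v\in\alpha}$ bound when $v\in\alpha$. For the probability that a freshly involved colour is in $\alpha$: the reinsertion picks each ball $I$ uniformly at random from $\cG'_{v,k-1}$, which has at least $m - 2(K_v-k) - 2 \ge m - 2(\ell+6)$ balls, of which at most $d_{\max}|\alpha|$ have a colour in $\alpha$; and the two reinserted balls themselves are among the balls of $\eta_v\setminus\xi_v$, but their colours were revealed by Algorithm~A as partners (at Step~3) only when they were among the $\le\ell-1$ partner-revelations, or they are among the $d_w-1$ "extra" balls at Step~5 whose colours are automatically among $\{C_1,\dots\}$ --- so the only genuinely new colours come from the $I,J$ choices. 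Counting: $\le\ell-1$ partner colours from Algorithm~A plus $\le 2(\ell+6)$ colours from the $I,J$ picks, each landing in $\alpha$ with conditional probability $\le d_{\max}|\alpha|/(m-2(\ell+6)) \le$ (under $m\ge 7d_{\max}^2\ell$, say) $\le 2d_{\max}|\alpha|/m$; a union bound over $\le (\ell-1) + 2(\ell+6) = 3\ell+11$ such events yields the $2d_{\max}|\alpha|(3\ell+11)/m$ term in \eqref{7}, and summing expectations instead gives \eqref{7.5}.

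The main obstacle---and the step that needs genuine care rather than crude counting---is justifying that at each step of either Algorithm~A or the reinsertion, the relevant ball is conditionally (near-)uniformly chosen among a set whose "bad" fraction ($\alpha$-coloured balls over total available) is controlled, \emph{given everything revealed so far including the partial intersection pattern with $\alpha$}. Conditioning on $A$ introduces a mild dependence (since $A$ is defined via all $K_v$, $v\in[n]$, not just the current vertex), so I would be careful to write, e.g., $\IP[\eta_v\cap\alpha\neq\emptyset,A\mid\dots] \le \IP[\eta_v\cap\alpha\ne\emptyset,\ K_v\le\ell+6\mid\dots]$ and work only with the single-vertex event $\{K_v\le\ell+6\}$, which suffices for the inequality. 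One also has to be slightly careful that when Step~3 of Algorithm~A reveals a partner, strictly speaking the conditioning is on the sequence of colours seen so far rather than on a clean $\sigma$-algebra; invoking the stopping-set machinery cited in Remark~\ref{rem20} (or just a direct inductive argument on the uniformity of the unrevealed sub-configuration) legitimizes treating each revealed partner as uniform over the remaining balls. Once that is set up, all four bounds follow from the same "$\le$ (number of reveals) $\times$ $2d_{\max}|\alpha|/m$" estimate, with the $I_{v\in\alpha}$ terms and the coefficients $\ell$ vs.\ $3\ell+12$ accounting for the two regimes ($\xi_v$ alone vs.\ $\eta_v$) and the two quantities (probability vs.\ expectation).
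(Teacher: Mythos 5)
Your proof is correct and follows essentially the same route as the paper's: split on whether $v\in\alpha$, bound each of the at most $\ell-1$ partner-revelations of Algorithm~A (respectively, the at most $2K_v\le 2(\ell+6)$ extra colours touched by the swaps on the event $\{K_v\le\ell+6\}$, with two ways per swap of hitting $\alpha$) by $2d_{\max}|\alpha|/m$ apiece, and either union-bound or sum expectations. The only cosmetic difference is that the paper phrases the per-step estimate as the probability $s(\alpha)/(m-d_{\max}(|\alpha|+\ell))$ that the queried ball is unpaired in $\cG|_{\alpha^c}$, combined with $s(\alpha)\le d_{\max}|\alpha|$, rather than as near-uniformity of the revealed partner over the unrevealed balls; the two viewpoints are equivalent and yield the same constants.
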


\begin{proof} Note that the event $\{\xi_v\cap\alpha\neq\emptyset\}
$ happens either if $v\in\alpha$ or
if $v\notin\alpha$ and, during the exploration of $\cT_\ell(v)$, a
ball whose partner is to be revealed in
Step~\ref{algorithmb2} of Algorithm A is an unpaired ball in $\cG
|_{\alpha^c}$, since then the partner is of a
colour in $\alpha$. If $v\in\alpha$, the bound is trivial; so,
assume $v\notin\alpha$. Before the exploration
starts, colour~$v$ is already considered explored, and since there
could be loops, the pairings of
up to $d_{\max}$ balls could have been revealed. If there are still
unpaired balls at this phase, the exploration
process starts. At this point, the probability that a specific ball is
unpaired in $\cG|_{\alpha^c}$ is
at most $s(\alpha)/(m-d_{\max} \llvert \alpha \rrvert
-d_{\max})$, so the
first ball
whose partner is to be revealed has at most
this probability of being an unpaired ball in $\cG|_{\alpha^c}$. If
the process continues, the next ball
whose partner is to be revealed has
a probability of at most $s(\alpha)/(m-d_{\max} \llvert \alpha
 \rrvert -2d_{\max})$
of being unpaired in $\cG|_{\alpha^c}$, and
so forth. The process
continues for at most $\ell-1$ steps, so that the probability of a
ball being unpaired in $\cG|_{\alpha^c}$ never exceeds
%
\begin{equation}
\label{9} \frac{s(\alpha)}{m- \llvert \alpha \rrvert  d_{\max}-d_{\max
}(\ell-1)} \leq \frac
{2s(\a)}{m} \leq \frac{2d_{\max}\llvert\alpha\rrvert}{m}
\end{equation}
if $m>2d_{\max}( \llvert \alpha \rrvert +\ell-1)$. Hence,
whenever $v\notin \alpha$,
\begin{equation*}
\IE\bigl\{ \llvert \xi_v\cap\alpha \rrvert \big|\a ,s(\alpha )\bigr
\} \leq\frac
{2d_{\max}\llvert\alpha\rrvert(\ell-1)}{m},
\end{equation*}
and \eqref{6.5} follows; bounding probabilities by expectations, \eqref{6} also follows.\\

In order to bound \eqref{7.5}, we proceed in a similar manner. The bound
is immediate if $v\in\a$, since $\llvert \eta_v\cap\alpha \rrvert \leq \llvert\xi_v\rrvert + 2K_v\leq \ell+2(\ell+6)$ on the event $A$;
so we now suppose that $v\notin\alpha$. We have $\llvert\eta_v\cap\alpha\rrvert = \llvert\xi_v\cap\alpha \rrvert +  \llvert (\eta_v\setminus\xi_v)\cap\alpha\rrvert$. The expectation of $\llvert\xi_v\cap\alpha \rrvert$ can be bounded by \eqref{6.5}, so we only need to consider the expectation of the latter summand on the event $A$. So,
assume that $\cT_\ell(v)$ has been explored. We now repeatedly
perform the swapping of paired balls $K_v$ times. Each time that a
swapping is performed, we could pick
a colour from $\alpha$ in one of two ways: either we pick a ball
from $\cG|_{\alpha^c}$ which is unpaired
in $\cG|_{\alpha^c}$, or we pick a ball directly from $\cG|_{\alpha
}$. The probability of the former is
no greater than
\begin{equation*}
\frac{s(\alpha)}{m-d_{\max}( \llvert \alpha \rrvert +\ell
)}\leq\frac{2s(\a)}{m} \leq \frac{2d_{\max}\llvert\alpha\rrvert}{m},
\end{equation*}
whenever $m>2d_{\max}( \llvert \alpha \rrvert +\ell)$ (note
that more and more
pairs are being put back now, so that
the denominator is in fact increasing), and the probability of the
latter is no greater than
\begin{equation*}
\frac{ d_{\max}\llvert \alpha \rrvert}{m-d_{\max}( \llvert \alpha \rrvert +\ell)} \leq\frac{2d_{\max}\llvert \alpha \rrvert}{m}.
\end{equation*}
Since, on the event $A$, we have $K_v \leq\ell+ 6$, we deduce that
the expected number of balls from $\cG|_\alpha$
being reached is no greater than
\begin{equation*}
\frac{4d_{\max}\llvert \alpha \rrvert K_v}{m} \leq\frac
{4d_{\max}\llvert \alpha \rrvert (\ell+6)}{m},
\end{equation*}
from which, when adding the last term of \eqref{6.5}, \eqref{7.5} follows. Using once again 
expectations to bound probabilities, \eqref{7}
also follows.
\end{proof}


\begin{corollary} \label{cor-rem}
Under the conditions of Lemma~\ref{lem3}, and assuming that $d_{\max}
\geq2$, that~$\ell\geq12$
and that $m \geq\max\{n,2d_{\max}(|\a|+\ell)\}$, we have
%
\begin{align}
\sum_{v=1}^n \IP\bigl[\xi_v
\cap\alpha\neq\emptyset\big|\alpha ,s(\alpha)\bigr] & \leq \llvert \alpha
\rrvert + \frac{2d_{\max}\llvert \alpha\rrvert(\ell-1) n}{m} \leq 2 \llvert \alpha \rrvert d_{\max}\ell;
\label{11}
\\
\sum_{v=1}^n \IE\bigl\{ \llvert
\xi_v\cap\alpha \rrvert \big| \alpha ,s(\alpha)\bigr\} & \leq
\ell\llvert \alpha
\rrvert + \frac{2d_{\max}\llvert \alpha\rrvert(\ell-1) n}{m} \leq 3 \llvert \alpha \rrvert d_{\max}\ell; \label{11.5}
\\
\label{12} \sum_{v=1}^n
\IP\bigl[\eta_v\cap\alpha\neq\emptyset,A\big| \alpha ,s(\alpha)
\bigr] & \leq \llvert\alpha
\rrvert + \frac{2d_{\max}\llvert \alpha\rrvert(3\ell+11) n}{m} \leq 8 \llvert \alpha \rrvert d_{\max}\ell;\\
\label{12.5} \begin{split}\sum_{v=1}^n
\IE\bigl\{ \llvert \eta_v\cap\alpha \rrvert I_A \big|
\alpha ,s(\alpha)\bigr\} & \leq (3\ell+12)\llvert\alpha
\rrvert + \frac{2d_{\max}\llvert \alpha\rrvert(3\ell+11) n}{m}\\ &\leq 10 \llvert \alpha \rrvert d_{\max}\ell;\end{split}%
\end{align}
\end{corollary}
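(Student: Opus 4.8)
\textit{Proof plan.} The idea is to obtain each of the four displayed bounds by summing the corresponding pointwise estimate of Lemma~\ref{lem3} over $v\in[n]$, and then to absorb the result into the stated constant multiple of $|\alpha|\,d_{\max}\ell$ using only the hypotheses $d_{\max}\ge2$, $\ell\ge12$ and $m\ge n$. First note that Lemma~\ref{lem3} applies, since by assumption $\alpha$ satisfies the required conditional-distribution property and $m\ge 2d_{\max}(|\alpha|+\ell)$, and that $\sum_{v=1}^n I_{v\in\alpha}=|\alpha|$ because $\alpha\subseteq[n]$.

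For \eqref{11} and \eqref{11.5} I would sum \eqref{6} and \eqref{6.5} over $v$: the indicator terms contribute $|\alpha|$ and $\ell|\alpha|$ respectively, while the remaining terms contribute $n\cdot 2d_{\max}|\alpha|(\ell-1)/m\le 2d_{\max}|\alpha|(\ell-1)$ by $m\ge n$. It then remains to verify the elementary inequalities $|\alpha|+2d_{\max}|\alpha|(\ell-1)\le 2|\alpha|d_{\max}\ell$ and $\ell|\alpha|+2d_{\max}|\alpha|(\ell-1)\le 3|\alpha|d_{\max}\ell$; after dividing by $|\alpha|$ the first is equivalent to $1\le 2d_{\max}$ and the second to $\ell(d_{\max}-1)\ge -2d_{\max}$, both of which hold for $d_{\max}\ge1$. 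The bounds \eqref{12} and \eqref{12.5} are treated identically, now summing \eqref{7} and \eqref{7.5}: the indicator terms give $|\alpha|$ and $(3\ell+12)|\alpha|$, and the rest is at most $2d_{\max}|\alpha|(3\ell+11)$ after using $m\ge n$, so that (dividing by $|\alpha|$) the claims reduce to $1+2d_{\max}(3\ell+11)\le 8d_{\max}\ell$ and $(3\ell+12)+2d_{\max}(3\ell+11)\le 10d_{\max}\ell$.

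The only step requiring any care is the last of these, which simplifies to $3\ell+12+22d_{\max}\le 4d_{\max}\ell$: rather than bounding crudely one splits the left-hand side and bounds each term by a fraction of $d_{\max}\ell$, using $d_{\max}\ge2$ and $\ell\ge12$ to get $3\ell\le\tfrac32 d_{\max}\ell$, $12\le\tfrac12 d_{\max}\ell$ and $22d_{\max}\le\tfrac{11}{6}d_{\max}\ell$, whose sum is $<4d_{\max}\ell$. The analogous step for \eqref{12} reduces to $1+22d_{\max}\le 2d_{\max}\ell$, immediate from $\ell\ge12$ and $d_{\max}\ge1$. There is no genuine obstacle here; beyond checking these fairly tight numerical inequalities, the corollary is pure bookkeeping on top of Lemma~\ref{lem3}.
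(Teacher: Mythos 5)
Your proposal is correct and is exactly the argument the paper intends (the corollary is stated without proof, being an immediate consequence of Lemma~\ref{lem3}): one sums \eqref{6}--\eqref{7.5} over $v$, uses $\sum_{v}I_{v\in\alpha}=\llvert\alpha\rrvert$ and $m\geq n$, and then checks the elementary numerical inequalities under $d_{\max}\geq2$ and $\ell\geq12$. Your verification of the tightest case, $3\ell+12+22d_{\max}\leq4d_{\max}\ell$, is accurate (at $d_{\max}=2$, $\ell=12$ it reads $92\leq96$), so nothing is missing.
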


We also need the following Efron--Stein type variance bound; see \cite
{Chen2017}.

\begin{lemma}\label{lem4} Let $\pi$ be a uniform permutation
on $[m]$, let \mbox{$B=(B_1,\dots,\break B_n)$}
be a sequence of i.i.d. random elements taking values in some suitable space,
and let~$f(\pi,B)$ be a real-valued function. Let $\tau_1,\dots,\tau
_{m-1}$ be independent transpositions, also
independent of all else, where $\tau_j$ transposes $j$ and a randomly
chosen integer in the
set $\{j,\ldots,m\}$, let $B'$ be an independent copy of $B$, and
for $1\leq i\leq n$, let
$B^i = (B_1,\dots,B_{i-1},B_i',B_i,\dots,B_n)$.
Then
\bes{
&\Var f(\pi,B)\\
&\qquad \leq\frac{1}{2} \sum_{j=1}^{m-1}
\IE\bigl(f(\pi,B)-f(\pi\tau_j,B)\bigr)^2
+ \frac{1}2\sum_{i=1}^n\IE
\bigl(f(\pi,B)-f\bigl(\pi,B^i\bigr)\bigr)^2.
}
\end{lemma}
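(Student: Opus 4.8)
The plan is to bound $\Var f(\pi,B)$ by an Efron--Stein / martingale-difference decomposition along a filtration that first reveals $\pi$ one position at a time and then reveals $B_1,\dots,B_n$ one at a time, controlling each increment by half the expected squared increment of $f$ under the corresponding elementary perturbation. First I would realise $\pi$ sequentially: let $\pi(1)$ be uniform on $[m]$ and, given $\pi(1),\dots,\pi(j-1)$, let $\pi(j)$ be uniform on $R_{j-1}:=[m]\setminus\{\pi(1),\dots,\pi(j-1)\}$. Put $\cF_j:=\sigma\bigl(\pi(1),\dots,\pi(j)\bigr)$ for $0\le j\le m-1$ (so $\cF_{m-1}=\sigma(\pi)$, since $\pi(m)$ is then determined) and $\cF_{m-1+i}:=\sigma(\pi)\vee\sigma(B_1,\dots,B_i)$ for $1\le i\le n$. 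With $\D_k:=\IE(f\mid\cF_k)-\IE(f\mid\cF_{k-1})$, orthogonality of the martingale increments gives
\[
  \Var f(\pi,B)=\sum_{j=1}^{m-1}\IE\bigl[\D_j^2\bigr]+\sum_{i=1}^{n}\IE\bigl[\D_{m-1+i}^2\bigr].
\]

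The $B$-increments are handled exactly as in the classical Efron--Stein inequality: conditionally on $\pi$ and on $(B_r)_{r\neq i}$ the quantity $\IE(f\mid\cF_{m-1+i})$ is a function of $B_i$ alone, and applying the identity $\Var Y=\tfrac12\IE(Y-Y')^2$ (for a random variable $Y$ with independent copy $Y'$) together with conditional Jensen gives $\IE[\D_{m-1+i}^2]\le\tfrac12\,\IE\bigl(f(\pi,B)-f(\pi,B^i)\bigr)^2$, where $B^i$ is $B$ with its $i$-th coordinate resampled. This is the second sum in the statement.

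For the permutation increments, fix $1\le j\le m-1$. Given $\cF_{j-1}$ the value $\pi(j)$ is uniform on $R_{j-1}$, and $\IE(f\mid\cF_j)$ is a function of $\pi(1),\dots,\pi(j)$; the same two ingredients reduce the bound on $\IE[\D_j^2]$ to controlling $\IE\bigl(f(\pi,B)-f(\hat\pi,B)\bigr)^2$, where $\hat\pi$ is obtained from $\pi$ by replacing the value $\pi(j)$ with an independent uniform pick $Y'$ from $R_{j-1}$ and re-randomising $\pi(j+1),\dots,\pi(m)$ over the remaining values. The one genuinely delicate step is the identification $(\pi,\hat\pi)\stackrel{d}{=}(\pi,\pi\tau_j)$ conditionally on $\cF_{j-1}$: swapping the value at position $j$ with an independent uniform value $Y'\in R_{j-1}$ is the same as transposing the values at positions $j$ and $k$, where $k$ is the position currently occupied by $Y'$; since the values at positions $j,\dots,m$ are a uniform arrangement of $R_{j-1}$ and $Y'$ is an independent uniform element of $R_{j-1}$, this $k$ is uniform on $\{j,\dots,m\}$ and independent of $\pi$, i.e.\ $(j\;k)$ has exactly the law of $\tau_j$. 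One then checks that the induced map on $\bigl(\text{bijections }\{j,\dots,m\}\to R_{j-1}\bigr)\times R_{j-1}$ is an involution preserving the product of uniform measures, so the pair $(\pi,\pi\tau_j)$ is exchangeable given $\cF_{j-1}$ and $\hat\pi$ may be replaced by $\pi\tau_j$ throughout (using also that $B$ is independent of $(\pi,\tau_j)$); this legitimises the conditional Jensen step and yields $\IE[\D_j^2]\le\tfrac12\,\IE\bigl(f(\pi,B)-f(\pi\tau_j,B)\bigr)^2$.

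Summing the two families of increments gives the asserted inequality. I expect the main obstacle to be precisely the identification in the previous paragraph: one must reveal the positions of $\pi$ in an order for which the ``resample one coordinate'' operation underlying the Efron--Stein bound on the $j$-th increment is realised exactly by one of the prescribed transpositions $\tau_j$ (which transposes $j$ with a uniform element of $\{j,\dots,m\}$), rather than by some more complicated rearrangement, and then to carry out the small measure-preserving bookkeeping that makes the conditional Jensen step valid. Everything else is routine.
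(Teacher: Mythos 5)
The paper itself gives no proof of Lemma~\ref{lem4}: it simply cites Chen, Goldstein and R\"ollin (in preparation) for the bound and Knuth (1969) for the origin of the transpositions $\tau_j$, so there is nothing in the text to compare against line by line. Your argument is, however, exactly the proof those pointers indicate: a martingale-difference (Efron--Stein) decomposition along the filtration that reveals $\pi(1),\dots,\pi(m-1)$ via the sequential (Fisher--Yates/Knuth) construction of a uniform permutation and then reveals $B_1,\dots,B_n$, with each increment bounded by $\tfrac12\IE(Y-Y')^2$ plus conditional Jensen. You also correctly isolate and resolve the one delicate point, namely that resampling the value in position $j$ from the remaining set $R_{j-1}$ is realised in law by $\pi\mapsto\pi\tau_j$ with $\tau_j$ transposing $j$ and an independent uniform element of $\{j,\dots,m\}$; the check that, conditionally on $\cF_{j-1}$, $\pi(j)$ and $Y'$, the modified permutation is uniform over extensions of the new prefix is precisely what legitimises the Jensen step. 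The proof is correct and complete (modulo reading the paper's evident typo in the definition of $B^i$ as ``resample the $i$-th coordinate'', which is clearly what is intended).
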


The appearance of the particular transpositions in the lemma comes from
using a well-known construction of
uniform random permutations; see \cite{Knuth1969}.

\subsection{Completing the proof of Theorem \protect\ref{THM1}}

Using these preliminary results, we can now bound the two terms
appearing in Theorem~\ref{thm4}.

\paragraph{Bounding $\boldsymbol{\IE|G\D^2|}$.} The bound
%
\begin{equation}
\label{G_v-def} \llvert G_v \rrvert = \frac{n}{\sigma_{d,h}} \bigl\llvert
h\bigl(\cT_\ell(v)\bigr) \bigr\rrvert \leq \frac{n \llVert  h  \rrVert }{\sigma_{d,h}}
\end{equation}
is straightforward. Now, it is easy to see that we can write
%
\begin{equation}
\label{D_v-def} \D_v = \frac{1}{\sigma_{d,h}}\sum
_{w\in Q_v}\bigl(h\bigl(\cT^v_\ell (w)
\bigr)-h\bigl(\cT_\ell(w)\bigr)\bigr),
\end{equation}
where
\begin{equation*}
Q_v:= \{w: \xi_w\cap\eta_v \neq\emptyset\}.
\end{equation*}
Put in words, $Q_v$ is the set of those colours whose truncated
components are \emph{potentially} affected when
changing the underlying graph from $\cG$ to $\cG'_v$. We emphasize
``potentially'' here since, even
if we have $\cT_\ell(w)\neq\cT^v_\ell(w)$, it is still possible that~$h(\cT_\ell
(w))= h(\cT^v_\ell(w))$.

Note first that we have the crude bound $|\D_v| \leq2n\sigma
_{d,h}^{-1} \llVert  h  \rrVert $, so that, using \eqref{gamma_n},
%
\begin{equation}
\label{GD^2-exception} \IE\bigl\{ \llvert G\rrvert\D^2
I_{A^c}\bigr\} \leq\frac{4n^3 \llVert  h  \rrVert ^3\g}{\sigma_{d,h}^{3}} \leq \frac{4 \llVert  h  \rrVert ^3d_{\max}^{16}\ell^{16}}{8!n^{4}\sigma_{d,h}^{3}}.
\end{equation}
For the main part, we observe that
%
\begin{equation}
\begin{split}\label{523} \IE\bigl\{\D_v^2
I_A\bigr\} &\leq\frac{4 \llVert  h  \rrVert ^2}{\sigma
_{d,h}^2}\IE\bigl\{ \llvert Q_v
\rrvert ^2 I_A\bigr\}
\\
& \leq\frac{4 \llVert  h  \rrVert ^2}{\sigma_{d,h}^2} \sum_{w=1}^n\sum
_{w'=1}^n \IP[\xi_{w'}\cap
\eta_v \neq \emptyset , \xi_{w}\cap\eta_v \neq
\emptyset,A]. \end{split} %
\end{equation}
Enlarging the set with which $\xi_{w'}$ is allowed to overlap, we have
%
\begin{equation}
\begin{split}\label{enlarging} &\IP[\xi_{w'}\cap
\eta_v \neq\emptyset,\xi_w\cap\eta_v \neq
\emptyset,A]
\\
&\qquad \leq\IE\bigl\{\I\bigl[\xi_{w'}\cap(\xi_w\cup
\eta_v)\neq\emptyset\bigr] \I [\xi_{w}\cap
\eta_v\neq\emptyset] I_A\bigr\}. \end{split} %
\end{equation}
Noting that $ \llvert \xi_w  \rrvert \leq\ell$ and $ \llvert \eta_v  \rrvert\leq \ell+2(\ell+6) \leq4\ell$,
the latter on the event $A$ and for $\ell\geq12$, we now apply \eqref
{11} to the right-hand side of \eqref{enlarging} twice, once
for $\alpha=\xi_w\cup\eta_v$ and once for $\alpha=\eta_v$. Hence,
the double sum on the right-hand side of \eqref{523} becomes
%
\begin{equation}
\begin{split}\label{enlarged-1} &\sum_{w=1}^n
\sum_{w'=1}^n \IP[\xi_w\cap
\eta_v \neq \emptyset, \xi_{w'}\cap\eta_v \neq
\emptyset, A]
\\
&\qquad \leq2d_{\max}\ell\sum_{w=1}^n
\IE\bigl\{ \llvert \xi_w\cup \eta_v \rrvert \I[
\xi_{w}\cap\eta_v\neq\emptyset] I_A\bigr\}
\\
&\qquad \leq 10 d_{\max}\ell^2\sum
_{w=1}^n\IE\bigl\{\IP\bigl[\xi _{w}\cap
\eta_v\neq\emptyset,A\big|\eta_v,s(\eta_v)
\bigr]\bigr\} \leq 80 d_{\max}^2\ell^4 \end{split}
\end{equation}
(this line of argument is frequently repeated in the calculations that follow).
Thus,
%
\begin{equation}
\label{13} \IE\bigl\{ \llvert Q_v \rrvert ^2
I_A\bigr\} \leq 80d_{\max}^2\ell^4,
\end{equation}
which yields
\begin{equation*}
\IE\bigl\{\llvert G\rrvert\D^2  I_A\bigr\} =
\frac{1}{n}\sum_{v=1}^n\IE\bigl\{
\llvert G_v\rrvert\D_v^2 
I_A\bigr\} \leq\frac{320 \llVert  h  \rrVert ^3d_{\max}^2\ell^4
n}{\sigma_{d,h}^3};
\end{equation*}
hence
\begin{equation*}
\IE \bigl\{\llvert G\rrvert\D^2 \bigr\} \leq\frac{ \llVert  h
 \rrVert ^3d_{\max}^2\ell^{10}
n}{\sigma_{d,h}^3} \biggl(\frac{320}{12^6}
+ \frac{4d_{\max}^{14}\ell^{6}}{ 8!n^5} \biggr) \leq\frac{\llVert  h  \rrVert ^3d_{\max}^2\ell^{10}
n}{4536\sigma_{d,h}^3},
\end{equation*}
if $\max\{d_{\max},\ell\} \leq n^{1/4}$.

\paragraph{Bounding $\boldsymbol{\Var\IE(G\D|W)}$.}
In order to bound $\Var\IE(G\D|W)$, note that we can generate the
random configuration $\cG$ by means of a
uniformly chosen random permutation $\pi$ of $[m]$ by pairing
balls $\pi(1)$ and $\pi(2)$, pairing
balls $\pi(3)$ and~$\pi(4)$ and so forth. But also note that the same
configuration can be generated by more
than one permutation: For example, both $(\pi(2),\pi(1),\pi(3),\dots
,\pi(m))$
and~$(\pi(3),\pi(4),\pi(1),\pi(2),\pi(5),\dots,\pi(m))$
represent the same graph. Moreover, recall that
in the construction of $\cG_v'$, we encode the choices made in
Algorithm A by~$B_1,\dots,B_n$. With this
in mind, we define
\begin{equation*}
X_v^{\pi} = h\bigl(\cT_\ell(v)\bigr),\qquad
X_w^{\pi,B_v} = h\bigl(\cT ^v_\ell(w)
\bigr),
\end{equation*}
and we write $Q_v^{\pi,B_v}$ instead of $Q_v$ (and we do the same
with other quantities) to make the
dependence on $\pi$ and $B_v$ explicit. Now, with $B=(B_1,\dots
,B_n)$, write
\begin{equation*}
\IE(G\D|\pi,B) = \frac{1}{\sigma_{d,h}^2}\sum_{v=1}^n
X_v^{\pi} \sum_{w\in Q_v^{\pi,B_v}}
\bigl(X_w^{\pi}-X_w^{\pi,B_v}\bigr) =:
\frac{1}{\sigma_{d,h}^2} f(\pi,B),
\end{equation*}
so that we can apply Lemma~\ref{lem4}.
For $1\leq i<j\leq m$, let $\pi_{ij} = \pi\tau_{ij}$, where $\tau
_{ij}$ is the transposition switching $i$ and $j$.
Now, it is clear that
%
\begin{equation}
\label{14} \IE\bigl(f(\pi,B)-f(\pi_{ij},B)\bigr)^2 =
\IE\bigl(f(\pi,B)-f(\pi_{13},B)\bigr)^2
\end{equation}
(unless $i=2k+1$ and $j=2k+2$ for some $0\leq k<m$, in which case the
expectation on the left-hand side vanishes,
since the transposition does not affect the underlying graph). Hence,
it is enough to bound the right-hand side
of \eqref{14}. Adding and subtracting corresponding terms, we have
\begin{equation*}
\begin{split} & f(\pi,B) - f(\pi_{13},B)
\\
&\qquad = \sum_{v=1}^n
X_v^{\pi} \sum_{w\in Q_v^{\pi,B_v}}
\bigl(X_w^{\pi}-X_w^{\pi,B_v}\bigr) - \sum
_{v=1}^n X_v^{\pi_{13}}
\sum_{w\in Q_v^{\pi_{13},B_v}}\bigl(X_w^{\pi_{13}}-X_w^{\pi_{13},B_v}
\bigr)
\\
&\qquad = \sum_{v=1}^n
\bigl(X_v^{\pi} - X_v^{\pi_{13}}\bigr) \sum
_{w\in Q_v^{\pi,B_v}}\bigl(X_w^{\pi}-X_w^{\pi,B_v}
\bigr)
\\
&\qquad\quad{} + \sum_{v=1}^n
X_v^{\pi_{13}} \biggl(\sum_{w\in Q_v^{\pi,B_v}}
\bigl(X_w^{\pi}-X_w^{\pi,B_v} \bigr)-\sum
_{w\in
Q_v^{\pi_{13},B_v}}\bigl(X_w^{\pi}-X_w^{\pi,B_v}
\bigr)\biggr)
\\
&\qquad\quad{} + \sum_{v=1}^n
X_v^{\pi_{13}} \sum_{w\in Q_v^{\pi_{13},B_v}}
\bigl(X_w^{\pi}-X_w^{\pi_{13}}\bigr)
\\
&\qquad\quad{} - \sum_{v=1}^n
X_v^{\pi_{13}} \sum_{w\in Q_v^{\pi_{13},B_v}}
\bigl(X_w^{\pi,B_v}-X_w^{\pi_{13},B_v}\bigr).
\end{split} %
\end{equation*}
Letting $\chi$ be the set of colours of the balls $\pi(1)$, $\pi
(2)$, $\pi(3)$ and $\pi(4)$,
we obtain
%
\begin{equation}
\label{f-squared-bound} \IE \bigl\{\bigl(f(\pi,B) - f(\pi_{13},B)
\bigr)^2 I_A \bigr\} \leq4 \llVert h \rrVert
^4\IE\bigl\{\bigl(R_1^2+R_2^2+R_3^2+R_4^2
\bigr) I_A\bigr\},
\end{equation}
where the event $A$ is as in \eqref{A_n} and where
\begin{align*}
R_1 & = \sum_{v=1}^n \I
\bigl[\chi\cap\xi_v^\pi\neq\emptyset\bigr] \bigl\llvert
Q_v^{\pi,B_v} \bigr\rrvert ,
\\
R_2 & = \sum_{v=1}^n \bigl
\llvert Q_v^{\pi,B_v} \setdiff Q_v^{\pi
_{13},B_v}
\bigr\rrvert ,
\\
R_3 & = \sum_{v=1}^n \sum
_{w\in Q_v^{\pi,B_v}} \I\bigl[\chi\cap\xi_w^\pi
\neq\emptyset\bigr],
\\
R_4 & = \sum_{v=1}^n \sum
_{w\in Q_v^{\pi,B_v}} \I\bigl[\cT^{v,\pi,B_v}_\ell(w)
\neq\cT^{v,\pi_{13},B_v}_\ell(w)\bigr],
\end{align*}
(note that for $R_3$ and $R_4$, we have replaced $\pi_{13}$ by $\pi$
and vice versa, since the two random permutations
are exchangeable). We now proceed to bound the four error terms individually.
In order to keep the formulae short, we abbreviate multiple sums such
as $\sum_{v=1}^n\sum_{w=1}^n$ to $\sum_{v,w}$,
where it is understood that summation always ranges from $1$ to $n$.

We begin with some preliminary calculations involving $\chi$.
First, let $\cG$ be the configuration generated by $\pi$ as before,
and let the set of colours $\alpha$ be a
function of $\cG$ (but not of $\pi$ directly).
Let $E_1(\a)$ denote the set of edges incident to $\a$, and let
$\cE:=  \{ \{\pi(1), \pi(2)\}, \{\pi(3),\pi(4)\}  \}$.
Then $\chi\cap\a\neq\emptyset$ if $\cE\cap E_1(\a) \neq
\emptyset$. Hence, since a given edge has probability
$1/(m/2)$ of being represented by $\{\pi(2i-1),\pi(2i)\}$ for
any $1\leq i \leq m/2$,
and since $ \llvert  E_1(\a)  \rrvert \leq d_{\max} \llvert
\a \rrvert $, we have
%
\begin{equation}
\label{chi-with-alpha} \IP[\chi\cap\alpha\neq\emptyset|\cG] \leq2 \frac{ \llvert \alpha \rrvert  d_{\max}}{m/2}
= \frac{4 \llvert \alpha \rrvert  d_{\max}}{m}.
\end{equation}
We also need to bound probabilities of the form
\begin{equation*}
\IP[\chi\cap\a_1\neq\emptyset, \chi\cap\alpha_2\neq
\emptyset |\cG],
\end{equation*}
for pairs of colour sets $\a_1,\a_2$ as above.
To this end, let $E_2(\a_1,\a_2)$ denote the set of
edges in $\cG$ that join a vertex in $\a_1$ to one in $\a_2$, and define
$E(\a_1,\a_2):= E_1(\a_1\cap\a_2) \cup E_2(\a_1,\a_2)$.
Then note that
%
\begin{equation}
\begin{split}\label{split} &\I[\chi\cap\alpha_1\neq
\emptyset, \chi\cap\alpha_2\neq \emptyset]
\\
&\qquad \leq \I\bigl[\cE\subset E_1(\a_1 \cup
\a_2)\bigr] + \I\bigl[\cE\cap E(\a_1,\a _2)
\neq\emptyset\bigr]. \end{split} %
\end{equation}
Now it is easy to see that
%
\begin{equation}
\label{both-in} \IP\bigl[\cE\subset E_1(\a_1 \cup
\a_2)\big|\cG\bigr] \leq\frac{(d_{\max}
 \llvert \a_1 \cup\a_2  \rrvert )^2}{(m/2)(m/2-1)} \leq\frac{5d_{\max}^2 \llvert \a_1 \cup\a_2  \rrvert ^2}{m^2},
\end{equation}
and that
%
\begin{equation}
\begin{split}\label{one-in} \IP\bigl[\cE\cap E(\a_1,
\a_2)\neq\emptyset\big|\cG\bigr] &\leq\IE\bigl\{ \bigl\llvert \cE\cap
E(\a_1,\a_2) \bigr\rrvert \big|\cG\bigr\}
\\
&\leq\frac{4}{m}\bigl( \bigl\llvert E_1(\a_1
\cap\a_2) \bigr\rrvert + \bigl\llvert E_2(
\a_1,\a_2) \bigr\rrvert \bigr) . \end{split} %
\end{equation}
Note, in particular, that \eqref{11.5} implies that, for any $1\leq
v\leq n$,
%
\begin{equation}
\label{E_1-xi} \sum_{w=1}^n \IE \bigl
\llvert E_1(\xi_w \cap\xi_v) \bigr\rrvert
\leq d_{\max}\sum_{w=1}^n \IE
\llvert \xi_w\cap\xi_v \rrvert \leq3d_{\max
}^2
\ell^2;
\end{equation}
then, taking $\a$ to be the set of all colours joined to $\xi_v$
in $\cG$, so that $|\a| \leq d_{\max}\ell$,
\eqref{11.5} also implies that
%
\begin{equation}
\label{E_2-xi} \sum_{w=1}^n \IE \bigl
\llvert E_2(\xi_w,\xi_v) \bigr\rrvert \leq
3d_{\max}^2\ell^2.
\end{equation}
In similar fashion, using \eqref{12.5}, we also have
%
\begin{equation}
\sum_{w=1}^n \IE\llvert
E_1(\eta_w\cap\eta_v) \rrvert  \leq 40d_{\max
}^2\ell^2,
\qquad
\sum_{w=1}^n \IE\llvert E_2(\eta_w,\eta_v) \rrvert   \leq 40d_{\max
}^2\ell^2. \label{E_12-eta}
\end{equation}

\noindent\emph{Bound on ${\IE\{ R_1^2 I_A\}}$.\enskip} First, write
\begin{equation*}
R_1 = \sum_{v,w} \I\bigl[\chi\cap
\xi_v^\pi\neq\emptyset\bigr]\I\bigl[\xi^\pi
_w\cap\eta_v^{\pi,B_v}\neq\emptyset\bigr].
\end{equation*}
Then, using \eqref{split}--\eqref{one-in}, we have
\begin{equation*}
\begin{split} &\IE\bigl\{R_1^2
I_A\bigr\}
\\
&\qquad = \IE\sum_{v,v',w,w'} \I\bigl[\chi\cap
\xi_{v'}^\pi \neq\emptyset\bigr]\I\bigl[\chi \cap \xi_{v}^\pi
\neq\emptyset\bigr] \I\bigl[\xi^\pi_{w'}\cap
\eta_{v'}^{\pi,B_{v'}}\neq\emptyset\bigr]
\\
&\qquad\qquad\qquad\qquad\quad{}\times \I\bigl[\xi^\pi_{w}\cap
\eta_v^{\pi,B_v}\neq\emptyset\bigr] I_A
\\
&\qquad \leq\frac{20\ell^2d_{\max}^2}{m^2}\IE\sum_{v,v',w,w'} \I[
\xi_{w'}\cap\eta_{v'}\neq\emptyset] \I[\xi_{w}\cap
\eta_v\neq\emptyset] I_A
\\
&\qquad\quad{} + \frac{4}m \IE\sum_{v,v',w,w'}
\bigl( \bigl\llvert E_1\bigl(\xi_{v'}^\pi\cap
\xi_{v}^\pi\bigr) \bigr\rrvert + \bigl\llvert
E_2\bigl(\xi_{v'}^\pi,\xi_{v}^\pi
\bigr) \bigr\rrvert \bigr) \I[\xi_{w'}\cap\eta_{v'}\neq
\emptyset]
\\
&\qquad\qquad\qquad\qquad\quad{}\times \I[\xi_{w}\cap\eta_v\neq\emptyset]
I_A. \end{split} %
\end{equation*}
Much as for \eqref{enlarged-1}, we use inequality analogous to \eqref
{enlarging} to show that the first term yields at most
\begin{equation*}
\begin{split} \frac{20d_{\max}^2\ell^2}{m^2} \sum_{v,v'}
144 d_{\max}^2\ell^4 & \leq\frac{2880n d_{\max}^4\ell^6 }{m},
\end{split} %
\end{equation*}
where we also used that $n\leq m$.
For the second term, again using inequalities in the spirit of \eqref
{enlarging}, we have
\begin{equation*}
\begin{split} &\frac{4}m\IE\sum
_{v,v',w,w'} \bigl( \bigl\llvert E_1\bigl(
\xi_{v'}^\pi\cap \xi_{v}^\pi\bigr)
\bigr\rrvert + \bigl\llvert E_2\bigl(\xi_{v'}^\pi,
\xi _{v}^\pi\bigr) \bigr\rrvert \bigr) \I[
\xi_{w'}\cap\eta_{v'}\neq\emptyset] \I[\xi_{w}\cap
\eta_v\neq\emptyset] I_A
\\
&\qquad \leq\frac{4}m \sum_{v,v'} 144
d_{\max}^2\ell^4 \IE\bigl\{ \bigl\llvert
E_1\bigl(\xi_{v'}^\pi\cap\xi_{v}^\pi
\bigr) \bigr\rrvert + \bigl\llvert E_2\bigl(\xi_{v'}^\pi,
\xi_{v}^\pi\bigr) \bigr\rrvert \bigr\} \leq
\frac{3456 nd_{\max}^4\ell^6 }{m}, \end{split} %
\end{equation*}
where the last line follows from \eqref{E_1-xi} and \eqref{E_2-xi},
and the two bounds combine to give
%
\begin{equation}
\label{R_1-bnd} \IE\bigl\{R_1^2 I_A\bigr\}
\leq\frac{6336n d_{\max}^4\ell^6 }{m}.
\end{equation}

\noindent\emph{Bound on ${\IE\{ R_2^2 I_A\}}$.\enskip}
First, note that
\begin{equation*}
\begin{split} & \I\bigl[w\in Q_v^{\pi,B_v}\setdiff
Q_v^{\pi_{13},B_v}\bigr]
\\
&\qquad \leq\bigl(\I\bigl[\chi\cap\xi_w^\pi\neq\emptyset
\bigr] + \I\bigl[\chi \cap\eta_v^{\pi,B_v}\neq\emptyset\bigr]
\bigr)\I\bigl[\xi_w^\pi\cap\eta _v^{\pi,B_v}
\neq\emptyset\bigr]
\\
&\qquad\quad{} + \bigl(\I\bigl[\chi\cap\xi_w^{\pi_{13}}\neq
\emptyset\bigr]+ \I\bigl[\chi\cap\eta_v^{\pi_{13},B_v}\neq\emptyset
\bigr]\bigr) \I\bigl[\xi_w^{\pi_{13}}\cap\eta_v^{\pi_{13},B_v}
\neq\emptyset\bigr]. \end{split} %
\end{equation*}
Hence, using the inequality $(a_1+\cdots+a_k)^2\leq k(a_1^2+\cdots
+a_k^2)$ and the exchangeability of $\pi$ and $\pi_{13}$,
%
\begin{align}
&\IE\bigl\{R^2_2 I_A\bigr\}
\nonumber
\\
&\qquad \leq\IE\biggl(\sum_{v,w} I_A
\Bigl[\bigl(\I\bigl[\chi\cap\xi_w^\pi\neq \emptyset\bigr]+ \I\bigl[\chi\cap\eta_v^{\pi
,B_v}\neq\emptyset
\bigr]\bigr)\nonumber\\[-1.5ex]
&\quad\qquad\qquad\qquad\qquad\qquad\qquad\qquad{}\times\I\bigl[\xi_w^\pi\cap\eta_v^{\pi,B_v}
\neq \emptyset\bigr]
\nonumber
\\[1.5ex]
&\qquad\qquad\qquad\quad{} +\bigl(\I\bigl[\chi\cap\xi_w^{\pi
_{13}}\neq
\emptyset\bigr]+ \I\bigl[\chi\cap \eta_v^{\pi_{13},B_v}\neq\emptyset
\bigr]\bigr)\nonumber\\
&\quad\qquad\qquad\qquad\qquad\qquad\qquad\qquad{}\times \I\bigl[\xi_w^{\pi_{13}}\cap \eta_v^{\pi_{13},B_v}
\neq\emptyset \bigr] \Bigr]\biggr)^2
\nonumber
\\
&\qquad \leq8 \IE\biggl(\sum_{v,w} \I\bigl[\chi\cap
\xi_w^\pi\neq \emptyset\bigr] \I\bigl[
\xi^\pi_w\cap\eta^{\pi,B_v}_v\neq
\emptyset\bigr] I_A\biggr)^2
\nonumber
\\
& \qquad\quad{}+ 8 \IE\biggl(\sum_{v,w} \I\bigl[\chi
\cap\eta_v^{\pi
,B_v}\neq\emptyset\bigr] \I\bigl[
\xi^\pi_w\cap\eta^{\pi,B_v}_v\neq
\emptyset\bigr] I_A\biggr)^2
\nonumber
\\
\begin{split} & \qquad= 8 \IE\sum_{v,v',w,w'}
\I\bigl[\chi\cap\xi_{w'}^\pi\neq\emptyset\bigr]\I\bigl[\chi
\cap \xi_{w}^\pi \neq\emptyset\bigr]
\\[-1ex]
&\qquad\qquad\qquad\qquad{} \times \I\bigl[\xi^\pi_w\cap
\eta^{\pi,B_v}_v\neq\emptyset\bigr] \I\bigl[\xi^\pi_{w'}
\cap\eta^{\pi,B_{v'}}_{v'}\neq\emptyset\bigr] I_A
\end{split} %
\label{R_2-part1}
\\
\begin{split} & \qquad\quad{}+ 8 \IE\sum
_{v,v',w,w'} \I\bigl[\chi\cap\eta_{v'}^{\pi,B_{v'}} \neq
\emptyset\bigr]\I\bigl[\chi\cap \eta_{v}^{\pi,B_{v}} \neq\emptyset
\bigr]
\\[-1ex]
& \qquad\qquad\qquad\qquad\quad{} \times \I\bigl[\xi^\pi_w\cap
\eta^{\pi,B_v}_v\neq\emptyset\bigr] \I\bigl[\xi^\pi_{w'}
\cap\eta^{\pi,B_{v'}}_{v'}\neq\emptyset\bigr] I_A.
\end{split} %
\label{R_2-part2}
\end{align}
We now use \eqref{split}--\eqref{one-in}, together with inequalities
such as in \eqref{enlarging}, to give
\begin{align*}
&\sum_{v,v'}\IE\bigl\{\I\bigl[\chi\cap
\xi_{w'}^\pi\neq \emptyset\bigr]\I \bigl[\chi\cap
\xi_{w}^\pi\neq\emptyset\bigr] \I\bigl[\xi^\pi_w
\cap\eta^{\pi
,B_v}_v\neq\emptyset\bigr] \I\bigl[
\xi^\pi_{w'}\cap\eta^{\pi
,B_{v'}}_{v'}\neq
\emptyset\bigr] I_A\bigr\}
\\*
&\qquad \leq\sum_{v,v'}\IE\biggl\{\biggl(
\frac{20d_{\max}^2\ell ^2}{m^2} + \frac{4}m\bigl( \bigl\llvert E_1(
\xi_w\cap\xi_{w'}) \bigr\rrvert + \bigl\llvert
E_2(\xi_w,\xi_{w'}) \bigr\rrvert \bigr)
\biggr)
\\
&\qquad\qquad\qquad{} \times \I\bigl[\eta_v \cap(\xi_w\cup
\xi_{w'}\cup \eta_{v'}) \neq\emptyset \bigr] \I\bigl[
\eta_{v'} \cap(\xi_w\cup\xi_{w'}) \neq\emptyset
\bigr] I_A \biggr\}
\\
&\qquad \leq\IE\biggl\{\frac{20d_{\max}^2\ell^2}{m^2} + \frac{4}m\bigl( \bigl
\llvert E_1(\xi_w\cap\xi_{w'}) \bigr\rrvert +
\bigl\llvert E_2(\xi _w,\xi_{w'}) \bigr\rrvert
\bigr) \biggr\} 768 d_{\max}^2\ell^4, %
\end{align*}
this last using \eqref{12} twice. Now sum over $w$ and $w'$, using
\eqref{E_1-xi} and \eqref{E_2-xi},
to give a contribution to $\IE\{R_2^2 I_A\}$ from \eqref{R_2-part1}
of at most
\begin{equation*}
\frac{8n}{m}\bigl(20d_{\max}^2\ell^2 +
12d_{\max}^2\ell^2 + 12d_{\max}
^2\ell^2\bigr) 768 d_{\max}^2
\ell^4 = \frac{270{,}336n d_{\max}^4\ell^6}{m}.
\end{equation*}
The term in \eqref{R_2-part2} is treated analogously, using \eqref
{E_12-eta} in place of
\eqref{E_1-xi} and \eqref{E_2-xi}, giving a further
\begin{equation*}
\frac{8n}{m}(320 + 160 + 160) 144 d_{\max}^4
\ell^6 = \frac{737{,}280nd_{\max}^4\ell^6}{m},
\end{equation*}
so that
%
\begin{equation}
\label{R_2-bnd} \IE\bigl\{R_2^2 I_A\bigr\}
\leq\frac{1{,}007{,}616nd_{\max}^4\ell^6}{m}.
\end{equation}

\noindent\emph{Bound on ${\IE\{R_3^2 I_A \}}$.\enskip} For this
term, we have
\begin{align*}
&\IE\bigl\{R_3^2 I_A\bigr\}
\\
&\quad = \IE\Biggl(\sum_{v=1}^n\sum
_{w=1}^n \I\bigl[\chi\cap
\xi_{w}^\pi \neq\emptyset\bigr] \I\bigl[
\xi_w^\pi\cap\eta_v^{\pi,B_v}\neq
\emptyset\bigr] I_A \Biggr)^2
\\
&\quad \leq\IE\sum_{v,v',w,w'} \I[\chi\cap
\xi_{w}\neq\emptyset]\I[\chi\cap\xi _{w'}\neq \emptyset] \I[
\xi_w\cap\eta_v\neq\emptyset] \I[\xi_{w'}\cap
\eta_{v'}\neq\emptyset] I_A. %
\end{align*}
This is the term in \eqref{R_2-part1}, but without the factor of $8$, giving
%
\begin{equation}
\label{R_3-bnd} \IE\bigl\{ R_3^2 I_A\bigr\}
\leq\frac{33{,}792nd_{\max}^4\ell^6}{m}.
\end{equation}

\noindent\emph{Bound on ${\IE\{R_4^2 I_A \}}$.\enskip} In order to
bound $\IE\{R_4^2 I_A\}$, note that
\begin{align*}
& \I\bigl[w\in Q_v^{\pi,B_v}\bigr]\I\bigl[
\xi^{\prime\pi,B_v}_{v,w} \neq \xi ^{\prime\pi_{13},B_v}_{v,w}\bigr]
\\
&\qquad \leq \I\bigl[\xi_w^\pi\cap\eta^{\pi,B_v}_v
\neq\emptyset\bigr] \bigl(\I\bigl[\chi\cap\xi_w^{\pi}\neq
\emptyset\bigr] + \I\bigl[\chi\cap\eta _v^{\pi,B_v}\neq\emptyset
\bigr]\bigr), %
\end{align*}
so that
\begin{align*}
\IE\bigl\{R_4^2 I_A\bigr\}
\leq{}&\IE\biggl(\sum_{v,w} \I\bigl[
\xi_w^\pi\cap\eta^{\pi,B_v}_v\neq
\emptyset\bigr] I_A \bigl(\I\bigl[\chi\cap\xi_w^{\pi}
\neq\emptyset\bigr] + \I\bigl[\chi \cap\eta _v^{\pi,B_v}\neq
\emptyset\bigr]\bigr)\biggr)^2
\\
\leq{}&4\IE\biggl(\sum_{v,w} \I\bigl[
\xi_w^\pi\cap\eta^{\pi,B_v}_v\neq
\emptyset\bigr] \I\bigl[\chi\cap\xi_w^{\pi}\neq\emptyset
\bigr] I_A\biggr)^2
\\
&{} + 4\IE\biggl(\sum_{v,w} \I\bigl[
\xi_w^\pi\cap\eta^{\pi,B_v}_v\neq
\emptyset\bigr] \I\bigl[\chi\cap\eta_v^{\pi,B_v}\neq\emptyset
\bigr] I_A\biggr)^2. %
\end{align*}
This is half the sum of the quantities given in \eqref{R_2-part1}
and \eqref{R_2-part2}, and hence
yields
%
\begin{equation}
\label{R_4-bnd} \IE\bigl\{R_4^2 I_A\bigr\}
\leq\frac{503{,}808nd_{\max}^4\ell^6}{m}.
\end{equation}
Substituting \eqref{R_1-bnd}, \eqref{R_2-bnd}, \eqref{R_3-bnd}
and \eqref{R_4-bnd} into \eqref{f-squared-bound} gives
\begin{equation*}
\IE \bigl\{\bigl(f(\pi,B) - f(\pi_{13},B)\bigr)^2
I_A \bigr\} \leq \frac{6{,}206{,}208 \llVert  h  \rrVert ^4n d_{\max}^4\ell^6}{m}
\leq \frac{ \llVert  h  \rrVert ^4n d_{\max}^4\ell^{16}}{9976m}.
\end{equation*}
The calculation on $A^c$ is based on the crude bound
\begin{equation*}
\bigl\llvert f(\pi,B) \bigr\rrvert \leq2n^2 \llVert h \rrVert
^2,
\end{equation*}
together with \eqref{gamma_n}, giving
\begin{align*}
\IE \bigl\{\bigl(f(\pi,B) - f(\pi_{13},B)
\bigr)^2 \I_{A^c} \bigr\} & \leq 16n^4 \llVert h
\rrVert ^4 \g
\\
& \leq\frac{16 n^4  \llVert  h  \rrVert ^4d_{\max}^{16}\ell
^{16}}{8!m^7}
\\
&\leq\frac{\llVert  h  \rrVert ^4nd_{\max}^{4}\ell
^{16}}{2520m}\times\frac
{d_{\max}^{12}}{m^2n}. %
\end{align*}
Thus, for $d_{\max}\leq n^{1/4}$ and $m\geq n$, we have
%
\begin{equation}
\label{pi13-term} \IE\bigl(f(\pi,B) - f(\pi_{13},B)\bigr)^2
\leq \frac{ \llVert  h  \rrVert ^4 nd_{\max}^4\ell^{16}}{2011m}.
\end{equation}
For the second sum in Lemma~\ref{lem4}, we have
\begin{equation*}
f(\pi,B) - f\bigl(\pi,B^v\bigr) = X_v^{\pi}
\biggl(\sum_{w\in Q_v^{\pi,B_v}}\bigl(X_w^{\pi}-X_w^{\pi,B_v}
\bigr) - \sum_{w\in Q_v^{\pi,B'_v}}\bigl(X_w^{\pi}-X_w^{\pi,B'_v}
\bigr)\biggr),
\end{equation*}
so that
\begin{equation*}
\bigl\llvert f(\pi,B) - f\bigl(\pi,B^v\bigr) \bigr\rrvert \leq2
\llVert h \rrVert ^2\bigl( \bigl\llvert Q_v^{\pi,B_v}
\bigr\rrvert + \bigl\llvert Q_v^{\pi,B'_v} \bigr\rrvert \bigr).
\end{equation*}
Hence, using exchangeability to replace $B_v'$ by $B_v$, we deduce that
\begin{equation*}
\IE\bigl\{\bigl(f(\pi,B) - f\bigl(\pi,B^v\bigr)\bigr)^2
I_A \bigr\} \leq16 \llVert h \rrVert ^4\IE \bigl\{
\llvert Q_v \rrvert ^2 I_A \bigr\},
\end{equation*}
and, using \eqref{13}, this gives
\begin{equation*}
\IE\bigl\{\bigl(f(\pi,B) - f\bigl(\pi,B^v\bigr)\bigr)^2
I_A \bigr\} \leq16 \llVert h \rrVert ^4
80d_{\max}^2\ell^4 \leq \frac{320}{12^{12}}\llVert h \rrVert ^4d_{\max}^4\ell^{16},
\end{equation*}
where we used that $d_{\max}\geq 2$ and $\ell\geq 12$.
Then, since $|f(\pi,B) - f(\pi,B^v)| \leq4n \llVert  h  \rrVert ^2$, it is
immediate that
\begin{equation*}\begin{split}
\IE\bigl\{\bigl(f(\pi,B) - f\bigl(\pi,B^v\bigr)\bigr)^2
I_{A^c} \bigr\} {}&{} \leq16 n^2 \llVert h \rrVert ^4
\g {}\leq \frac{16n^2\llVert h \rrVert ^4 d_{\max}^{16}\ell^{16}}{8!m^7} \\
&{}\leq \frac{16}{8!} \llVert h \rrVert ^4 d_{\max}^4
\ell^8\times\frac{d_{\max}^{12}\ell^8}{m^5}\leq\frac{16}{8!12^8} \llVert h \rrVert ^4 d_{\max}^4
\ell^{16},
\end{split}
\end{equation*}
where we used that $\max\{d_{\max},\ell\}\leq n^{1/4}$.
Hence,
%
\begin{equation}
\label{B_v-term} \IE\bigl(f(\pi,B) - f\bigl(\pi,B^v\bigr)
\bigr)^2 \leq\frac{1}{10^{10}} \llVert h \rrVert ^4 d_{\max}
^4\ell^{16}.
\end{equation}
Substituting these bounds into Lemma~\ref{lem4}, we obtain that
\begin{equation*}
\Var f(\pi,B) \leq \frac{1}{4021} n \llVert h \rrVert ^4
d_{\max
}^4\ell^{16},
\end{equation*}
and hence that
\begin{equation*}
\sqrt{\Var\IE(G\D| W)} \leq\frac{\sqrt n  \llVert  h  \rrVert ^2 d_{\max}^2\ell
^8}{63 \sigma_{d,h}^2},
\end{equation*}
completing the proof of Theorem~\ref{THM1}.

\subsection{The variance $\s^2_{d,h}$}\label{sect-variance}
It follows, by substituting $f(w)=1$ and then $f(w)=w$ for all $w$ into
the Stein coupling \eqref{3},
that $\Var W = \IE\{G\D\}$. Recalling the definitions \eqref
{G_v-def} and \eqref{D_v-def} of $G_v$ and $\D_v$,
it then follows that
%
\begin{equation}
\label{variance} \s^2_{d,h} = -n\IE \biggl\{ h\bigl(
\cT_\ell(I)\bigr)\sum_{w \in Q_I}\bigl(h\bigl(
\cT ^I_\ell(w)\bigr) - h\bigl(\cT_\ell(w)\bigr)
\bigr) \biggr\},
\end{equation}
where $I$ denotes a randomly chosen vertex in $[n]$. Under asymptotic
circumstances in which
the expectation in \eqref{variance} remains of order $\bigo(1)$
as $n\toinf$, this yields a variance $\s^2_{d,h}$
of order $\bigo(n)$. Broadly speaking, such circumstances are those in
which the value of $h(\cT_\ell(v))$ is
not much influenced by vertices far from $v$.
As far as the accuracy in Theorem~\ref{THM1} is concerned, it is
advantageous to have $n^{-1}\s^2_{d,h}$
bounded below as $n\toinf$. This is equivalent to requiring that the
expectation
in~\eqref{variance} does not tend to zero as $n\toinf$, which might
usually be supposed to be the case.
If, however, $h(\cT_\ell(v)):= \I[d_v = k]$ for some $k$,
then $h(\cT^v_\ell(w)) = h(\cT_\ell(w))$ for all $v,w$,
and the expectation would be exactly zero---as it has to be, since the
number of vertices of any
given degree $k$ is fixed in the model. So, in practice, this condition
has to be checked.

\section*{Acknowledgements}

We thank the referees for their careful reading, helpful comments and additional references. We also thank Siva Athreya and D. Yogeshwaran for sharing an early draft of their manuscript with us. This work was started while ADB was Saw Swee Hock Professor of Statistics at the National University of Singapore. ADB thanks the Department of Statistics and Applied Probability at the National University of Singapore and the mathematics departments of the University of Melbourne and Monash University for their kind hospitality. ADB was also supported in part by Australian Research Council Grants Nos DP120102728, DP120102398, DP150101459 and DP150103588, and by their Centre of Excellence for Mathematical and Statistical Frontiers. AR~was supported in part by NUS Research Grant R-155-000-167-112.

\setlength{\bibsep}{0.5ex}
\def\bibfont{\small}


\end{document}